\documentclass[12pt]{amsart}
\usepackage{amsmath}
\usepackage{amssymb}
\usepackage{latexsym}

\newcommand{\halmos}{\rule{5pt}{5pt}}

\numberwithin{equation}{section}
\newtheorem{thm}{Theorem}[section]
\newtheorem{prop}[thm]{Proposition}
\newtheorem{lemma}[thm]{\bf Lemma}

\newtheorem{cor}[thm]{\bf Corollary}

\theoremstyle{definition} 

\theoremstyle{remark}

\begin{document}
\title[Special solutions of $q$-Heun equation by $q$-integral transformations]
{Special solutions of $q$-Heun equation by $q$-integral transformations}
\author{Ayaka Murakami}
\address{Department of Mathematics, Ochanomizu University, 2-1-1 Otsuka, Bunkyo-ku, Tokyo 112-8610, Japan}
\email{ayaka.murakami.math@gmail.com}
\author{Kouichi Takemura}
\address{Department of Mathematics, Ochanomizu University, 2-1-1 Otsuka, Bunkyo-ku, Tokyo 112-8610, Japan}
\email{takemura.kouichi@ocha.ac.jp}
\subjclass[2010]{39A13,33D15,33E30}
\keywords{$q$-Heun equation, $q$-hypergeometric function, Jackson integral}
\begin{abstract}
We obtain special solutions of the $q$-Heun equation which are expressed as finite summations of $q$-hypergeometric functions.
These solutions are obtained by considering the $q$-integral transformations of the polynomial-type solutions.
\end{abstract}
\maketitle

\section{Introduction}

The $q$-Heun equation
\begin{align}
& \{a_{2}x^{2} + a_{1}x + a_{0}\}g(x/q) - \{b_{2}x^{2} + b_{1}x + b_{0} \}g(x) 
 \label{eq:qHeun0} \\
& + \{c_{2}x^{2} + c_{1}x + c_{0} \}g(xq) = 0, \qquad a_{2}a_{0}c_{2}c_{0} \neq 0 \notag
\end{align}
was introduced by Hahn \cite{Hahn} in 1971.
As $q \to 1$, the $q$-Heun equation tends to Heun's differential equation (see \cite{Hahn,TakR}).
Although special cases of equation (\ref{eq:qHeun0}) had appeared in some papers (e.g.~\cite{MP,WZ}), it seems that the discovery of Hahn had been forgotten for a long time because the paper \cite{Hahn} had not been cited before 2016. 
The $q$-Heun equation was rediscovered by the second author \cite{TakR} through study of integrable systems, namely of degenerations of Ruijsenaars-van Diejen system \cite{vD0,RuiN}.
Let $T_{x}^{\pm 1}$ be the $q$-shift operator defined by $T_{x}^{\pm 1} g(x) = g(q^{\pm 1}x)$ and 
\begin{align}
  &A^{\langle 4 \rangle} (x; h_{1}, h_{2}, l_{1}, l_{2}, \alpha_{1}, \alpha_{2}, \beta) \label{eq:A4} \\
  &\; = x^{-1} (x-q^{h_{1} + 1/2} t_{1}) (x-q^{h_{2} +1/2} t_{2}) T_{x}^{-1}  + q^{\alpha_{1} + \alpha_{2}} x^{-1} (x - q^{l_{1} -1/2} t_{1})(x - q^{l_{2} - 1/2} t_{2}) T_{x} \notag \\
  &\qquad \quad - \{(q^{\alpha_{1}} + q^{\alpha_{2}})x + q^{(h_{1} + h_{2} + l_{1} + l_{2} + \alpha_{1} + \alpha_{2})/2}(q^{\beta/2} + q^{-\beta/2}) t_{1} t_{2} x^{-1}\} \notag
\end{align}
be the operator which is obtained by degenerating the one variable Ruijsenaars-van Diejen operator four times.
Then, the $q$-Heun equation admits the expression as
  \begin{align}
    A^{\langle 4 \rangle} (x; h_{1}, h_{2}, l_{1}, l_{2}, \alpha_{1}, \alpha_{2}, \beta) g(x) = E g(x), \quad \text{$E \in \mathbb{C}$}. \label{eq:q-Heun}
  \end{align}
Here, the parameter $b_{1}$ in equation (\ref{eq:qHeun0}) essentially corresponds to the eigenvalue $E$.
We regard the parameter $b_{1}$ or the eigenvalue $E$ as an accessory parameter, which is a $q$-analogue of the accessory parameter of Heun's differential equation.

The Heun equation or Heun's differential equation is a standard form of the second order linear differential equation with four regular singularities on the Riemann sphere, and it is written as 
\begin{align}
  \frac{{\rm d}^{2}y}{{\rm d}z^{2}} + \left(\frac{\gamma}{z} + \frac{\delta}{z - 1} + \frac{\epsilon}{z - t} \right) \frac{{\rm d}y}{{\rm d}z} + \frac{\alpha \beta z - B}{z(z - 1)(z - t)}y = 0, \label{eq:Heun}
\end{align}
under the condition $\gamma + \delta + \epsilon = \alpha + \beta + 1$.
The parameter $B$ in equation (\ref{eq:Heun}) is called the accessory parameter, and it depends on the global structure of solutions.
Although it is difficult to calculate the global structure of solutions for each $B \in \mathbb{C}$, some special solutions are known for special values of the accessory parameter $B$ and the other parameters.
In particular, there exist special solutions to equation (\ref{eq:Heun}) which are expressed as finite summation of the hypergeometric functions (see \cite{TakIT}), and they are related to integral transformations of solutions of Heun's differential equation.
Kazakov and Slavyanov \cite{KS} essentially obtained the integral transformation as follows.
Let $v(w)$ be a solution of Heun's differential equation in the form
\begin{align*}
  \frac{{\rm d}^2 v}{{\rm d}w^2} + \left( \frac{\gamma '}{w}+\frac{\delta '}{w-1}+\frac{\epsilon '}{w-t}\right) \frac{{\rm d}v}{{\rm d}w} + \frac{\alpha ' \beta ' w -B '}{w(w - 1)(w - t)} v = 0 ,
\end{align*}
and let $C$ be a suitable integral path in the $w$-plane.
Then, the function
\begin{align*}
  y(z)=\int _{C} v(w) (z-w)^{-\alpha } {\rm d}w
\end{align*}
is a solution to equation (\ref{eq:Heun}), if the parameters satisfy
\begin{align*}
  &\gamma '=\gamma +1 -\alpha , \qquad \delta' =\delta +1-\alpha , \qquad \epsilon '=\epsilon +1-\alpha , \qquad \alpha '=2-\alpha , \\
  &\beta '= -\alpha +\beta +1 ,\qquad B'=B+(1-\alpha )(\epsilon +\delta t+(\gamma -\alpha ) (t+1)).
\end{align*}
Note that the integral transformation of the Heun equation was also derived by the middle convolution \cite{TakI,TakM} where the integral path was taken as the Pochhammer contours, and the integral transformation was applied to obtain the special solutions of the Heun equation which are expressed as finite summations of the hypergeometric equation \cite{TakIT}.

A formal $q$-integral transformation of the $q$-Heun equation was found in \cite{STT2} through the study of the $q$-middle convolution on the linear $q$-difference equations which are related to the $q$-Painlev\'e VI.
A rigorous form of the $q$-integral transformation of the $q$-Heun equation was formulated by developing a theory of the kernel function identity in \cite{TakK}, which we describe in Theorem \ref{thm:q-trans}.

It is known that the degenerated Ruijsenaars-van Diejen operator in equation (\ref{eq:A4}) preserves a finite-dimensional space if the parameters satisfies a relation \cite{TakqH}, and it is related to the polynomial-type solutions of the $q$-Heun equations \cite{KST}.
On the other hand, the $q$-Heun equation may have a hypergeometric solution, if the parameters are special.
The ${}_r\phi_{r-1}$ basic hypergeometric series is defined by
  \begin{align}
    {}_r\phi_{r-1} \biggl( \begin{array}{@{}c@{}} a_{1}, \cdots, a_{r} \\
b_{1}, \cdots, b_{r-1} \end{array} ;q,z \biggr) = \sum_{n = 0}^{\infty} \frac{(a_{1}; q)_{n} \cdots (a_{r}; q)_{n}}{(q; q)_{n}(b_{1}; q)_{n} \cdots (b_{r-1}; q)_{n}} z^{n}. \label{eq:bhs}
  \end{align}
It is well known that the basic hypergeometric function $_2 \phi _1 (a ,b ;c ;x) $ satisfies the basic (or the $q$-difference) hypergeometric equation 
\begin{equation*}
(x-q) f(x/q) - ((a+b)x -q-c)f(x)+ (abx-c)f(q x)=0, 
\end{equation*}
whose expression is simpler than the $q$-Heun equation (\ref{eq:qHeun0}).
The variant of the $q$-hypergeometric equation of degree two was introduced in \cite{HMST} as
\begin{align}
& (x-q^{h_1 +1/2} t_1) (x - q^{h_2 +1/2} t_2) g(x/q) + q^{\alpha _1 +\alpha _2} (x - q^{l_1-1/2}t_1 ) (x - q^{l_2 -1/2} t_2) g(q x) \label{eq:varqhgdeg2} \\
&  -[ (q^{\alpha _1} +q^{\alpha _2} ) x^2 +E x + p ( q^{1/2}+ q^{-1/2}) t_1 t_2 ] g(x) =0, \nonumber \\
& p= q^{(h_1 +h_2 + l_1 + l_2 +\alpha _1 +\alpha _2 )/2 } , \quad E= -p \{ (q^{- h_2 }+q^{-l_2 })t_1 + (q^{- h_1 }+ q^{- l_1 }) t_2 \} \nonumber .
\end{align}
This equation is obtained by restricting the parameters of the $q$-Heun equation in the form of equation (\ref{eq:q-Heun}). 
It was established in \cite[Theorem 4.5]{AT} that the following functions
 \begin{align}
& h_1 (x) = x^{ \lambda _{1} } \frac{( q^{ \lambda _{1} - h_{1} + \alpha _{1} + 1/2}x/t_{1}; q)_\infty}{( q^{-h_{1} + 1/2}x/t_{1}; q)_\infty} \label{eq:solvarHG2} \\
  &\quad \times {}_3\phi_2 \biggl( \begin{array}{@{}c@{}} q^{ \lambda _{1} -h_{1} + l_{1} + \alpha _{1} }, q^{ \lambda _{1} -h_{1} + l_{2} + \alpha _{1} }t_{2}/t_{1}, q^{-h_{1} + 1/2}x/t_{1} \\
     q^{- h_{1} + h_{2} + 1}t_{2}/t_{1}, q^{ \lambda _{1} - h_{1} + \alpha _{1} + 1/2}x/t_{1} \end{array} ;q,q^{ \lambda _{1}  + \alpha_{2} } \biggr), \notag \\
  & h_2 (x) = x^{ \lambda _{1} } \frac{( q^{ \lambda _{1} - h_{2} + \alpha _{1} + 1/2}x/t_{2}; q)_\infty}{( q^{-h_{2} + 1/2}x/t_{2}; q)_\infty} \notag \\
  &\quad \times {}_3\phi_2 \biggl( \begin{array}{@{}c@{}} q^{ \lambda _{1} -h_{2} + l_{1} + \alpha _{1} }t_{1}/t_{2}, q^{ \lambda _{1} -h_{2} + l_{2} + \alpha _{1} }, q^{-h_{2} + 1/2}x/t_{2} \\
     q^{h_{1} - h_{2} + 1}t_{1}/t_{2}, q^{ \lambda _{1} - h_{2} + \alpha _{1} + 1/2}x/t_{2} \end{array} ;q,q^{ \lambda _{1}  + \alpha_{2} } \biggr), \notag \\
 & h_3 (x) =x^{ - \alpha_{2} } \frac{(q^{- \lambda _{1} + h_{1} - \alpha _{1} + 3/2}t_{1}/x, q^{- \lambda _{1} + h_{2} - \alpha _{1} + 3/2}t_{2}/x ; q)_{\infty}}{(q^{l_{1} + 1/2}t_{1}/x, q^{l_{2} + 1/2}t_{2}/x ; q)_{\infty}} \notag \\
  &\quad \times {}_3\phi_2 \biggl( \begin{array}{@{}c@{}} q^{l_{1} + 1/2}t_{1}/x, q^{l_{2} + 1/2}t_{2}/x, q^{- \lambda _{1} - \alpha _{1} + 1} \\
     q^{- \lambda _{1} + h_{1} - \alpha _{1} + 3/2}t_{1}/x, q^{- \lambda _{1} + h_{2} - \alpha _{1} + 3/2}t_{2}/x \end{array} ;q,q^{ \lambda _{1}  + \alpha_{2} } \biggr). \notag 
  \end{align}
are solutions of equation (\ref{eq:varqhgdeg2}), if $\lambda _1 +\alpha _2 >1 $.
We can regard these functions as the special solutions to the $q$-Heun equation.

In this paper, we obtain special solutions of the $q$-Heun equation which are written as finite summations of the $q$-hypergeometric functions by using the $q$-integral transformation of the $q$-Heun equation.
Our special solutions consist of two families.
One is a generalization of the solutions in equation (\ref{eq:solvarHG2}), that is described in Theorem \ref{thm:2sols}.
The parameters of the $q$-Heun equation for the solutions satisfy the condition that $\beta $ is a positive integer and the singularity $x=0$ of the $q$-difference equation is apparent, whose condition is described as a relation for the accessory parameter $E$.
See Section \ref{sec:appsing} for the apparent singularity.
Another family of our special solutions is written as the finite summations of $_2 \phi _1 $, and the parameters satisfies the condition that $ l_{2} - h_{2} $ is a positive integer and  the accessory parameter $E$ satisfies a certain relation.
See Theorem \ref{thm:1sols} for details of the solutions.

This paper is organized as follows.
In Section \ref{sec:prelqint}, we review the $q$-integral transformations and preliminary things which are related to the $q$-Heun equation.
We describe the main results in Section \ref{sec:q-Int}.
In Section \ref{sec:pow}, we review power series solutions of the $q$-Heun equation and the polynomial-type solutions.
In Section \ref{sec:appsing}, some properties on an apparent singularity about $x=0$ are explained.
In Section \ref{sec:1sol}, we discuss the first family of the solutions of the $q$-Heun equation.
In Section \ref{sec:2sol}, we discuss the second family of the solutions of the $q$-Heun equation.
In Section \ref{sec:rem}, we give concluding remarks.

Throughout this paper, we assume that the complex number $q$ satisfies $0 <|q|<1$.

\section{$q$-integral transformation and gauge-transformation} \label{sec:prelqint}

\subsection{Preliminaries}
The $q$-shifted factorials are defined by
  \begin{align*}
    &(a; q)_{n} :=
    \begin{cases}
      1, & n = 0, \\
      (1 - a) (1-aq) \cdots(1 - aq^{n - 1}), & n = 1, 2, \cdots , \\
     1/(aq^{n}; q)_{-n}, & n = -1, -2, \cdots ,
    \end{cases} \\
    &(a; q)_{\infty} := \prod_{k = 0}^{\infty} (1 - aq^{k}) .
  \end{align*}
We have
  \begin{align}
 & (a; q)_{n} = \frac{(a; q)_{\infty}}{(aq^{n}; q)_\infty}, \quad n \in \mathbb{Z} , \label{eq:GR1.2.30} \\
 & \frac{1}{(q; q)_{k}} = 0, \qquad \qquad  k \in \mathbb{Z}_{< 0} . \label{eq:qshift0}
  \end{align}
We use the notation such that
  \begin{align*}
    (a_{1}, \cdots, a_{m}; q)_{n} = (a_{1}; q)_{n} \cdots (a_{m}; q)_{n}, \quad (n \in \mathbb{Z} \cup \{\infty\}).
  \end{align*}
We adopt the definition of the theta function as
\begin{align} 
\label{def:theta}
    \vartheta_{q}(t) := (t, q/t, q; q)_{\infty}.
\end{align}
Then, we have 
  \begin{lemma} \label{lem:theta}
If $K$ is an integer, then 
   \begin{align*}
      \frac{\vartheta_{q}\left(q^{K}t \right)}{\vartheta_{q}\left(q^{K}s \right)} = \left(\frac{s}{t}\right)^{K} \frac{\vartheta_{q}\left(t \right)}{\vartheta_{q}\left(s \right)}.
    \end{align*}
  \end{lemma}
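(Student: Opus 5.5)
The plan is to derive everything from the quasi-periodicity of $\vartheta_q$ under $t\mapsto qt$, and then iterate it $K$ times, handling positive and negative $K$ separately.

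\textbf{Step 1: the one-step relation.} Show that $\vartheta_q(qt) = -t^{-1}\vartheta_q(t)$. From the definition (\ref{def:theta}),
\begin{align*}
\vartheta_q(qt) = (qt;q)_\infty\,(1/t;q)_\infty\,(q;q)_\infty .
\end{align*}
The factor $(qt;q)_\infty$ equals $(t;q)_\infty/(1-t)$, and $(1/t;q)_\infty$ equals $(1-1/t)(q/t;q)_\infty$. Since $(1-1/t)/(1-t) = -1/t$, this gives the claimed relation.

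\textbf{Step 2: the case $K\ge 0$.} Iterating Step 1 by induction on $K$ gives
\begin{align*}
\vartheta_q(q^K t) = (-1)^K q^{-K(K-1)/2}\, t^{-K}\,\vartheta_q(t).
\end{align*}
The constant $(-1)^K q^{-K(K-1)/2}$ depends only on $K$, not on $t$. Therefore it cancels when we form the ratio with the same expression in $s$, leaving
\begin{align*}
\frac{\vartheta_q(q^K t)}{\vartheta_q(q^K s)} = \left(\frac{s}{t}\right)^K \frac{\vartheta_q(t)}{\vartheta_q(s)} .
\end{align*}

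\textbf{Step 3: the case $K<0$.} Apply the nonnegative case to the exponent $-K$, with $t$ and $s$ replaced by $q^K t$ and $q^K s$. This yields
\begin{align*}
\frac{\vartheta_q(t)}{\vartheta_q(s)} = \left(\frac{s}{t}\right)^{-K}\frac{\vartheta_q(q^K t)}{\vartheta_q(q^K s)} .
\end{align*}
Rearranging gives the lemma for negative $K$ as well.

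\textbf{Main point to watch.} There is no real obstacle; the only care needed is with the sign and $q$-power bookkeeping in Step 1. Throughout, we assume the arguments avoid the zeros $q^{\mathbb Z}$ of $\vartheta_q$, so that all the quotients are defined.
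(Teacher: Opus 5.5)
Your proof is correct. The one-step relation $\vartheta_q(qt)=-t^{-1}\vartheta_q(t)$, its iteration (where the $t$-independent factor $(-1)^K q^{-K(K-1)/2}$ cancels in the ratio), and the reduction of $K<0$ to $-K>0$ via $t\mapsto q^{K}t$, $s\mapsto q^{K}s$ are all right, and this standard quasi-periodicity argument is exactly what the paper implicitly relies on, since it states the lemma without proof as an immediate consequence of the definition of $\vartheta_q$.
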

The summation 
  \begin{align*}
    \int_{0}^{\xi \infty} f(s) {\rm{d}}_{q}s = (1 - q) \sum_{n = - \infty}^{\infty} q^{n} \xi f(q^{n}   \xi)
  \end{align*}
is called the Jackson integral, where $\xi \neq 0 $.
The usual integration on $(0, +\infty )$ is obtained by the limit $q \to 1$.

\subsection{$q$-integral transformations of $q$-Heun equations}
We recall $q$-integral transformations which are related to the $q$-Heun equations.
\begin{thm} [\cite{TakK}, $q$-integral transformations of $q$-Heun equations] \label{thm:q-trans}
Assume that the function $h(s)$ satisfies
\begin{align*}
& A^{\langle 4 \rangle} ( s; h'_1, h'_2, l'_1, l'_2, \alpha '_1, \alpha '_2, \beta ' ) h(s) = E' h(s) 
\end{align*}
and
\begin{align}
& \lim _{L \to +\infty } \frac{h(s)}{s^{(h'_1 + h'_2 - l'_1 - l'_2 - \alpha '_1 - \alpha '_2 + \beta ' +2)/2}} \big| _{s=q^{L} \xi } = C_1, \;  \lim _{K \to -\infty } \frac{h(s)}{s^{-\alpha ' _1}} \big| _{s=q^{K} \xi } = C_2 \label{eq:A4hslim}
\end{align}
for some constants $E', C_1 ,C_2$, and the variable $\xi $ is independent of the variable $x$ or it is proportional to $x$ (i.~e.~$\xi =Ax$ where $A $ is independent of $x$).
Let $h_1, h_2, l_1, l_2, \alpha _1, \alpha _2$, $\beta , E , \chi ,  \mu , \mu _0 $ be the parameters which satisfy
\begin{align}
    &  \chi = ( l'_1 + l'_2 - h'_1 - h'_2 - \alpha '_1 + \alpha '_2 - \beta ' )/2 , \quad \mu = \mu_{0} + 1 + \chi,  \label{eq:chi'} \\
    &E = q^{\mu _0 + \alpha _1 - \alpha ' _1 } E', \quad \beta = - \beta ' - \chi , \quad \alpha _2 = \alpha _1 - \alpha ' _1 + \alpha ' _2 - \chi , \nonumber \\
    & l_i = l'_i +\mu _0 , \quad h_i = h'_i +\mu _0 +\chi , \quad i = 1, 2 . \nonumber 
\end{align}
Set
\begin{align*}
 P_{\mu, \mu_{0}}^{(1)} (x, s) := \frac{(q^{\mu} s/x; q)_{\infty}}{(q^{\mu_{0}} s/x; q)_{\infty}}, \;  P_{\mu, \mu_{0}}^{(2)} (x, s) := \left(\frac{x}{s} \right)^{\mu - \mu_{0}} \frac{(q^{-\mu_{0} + 1} x/s; q)_{\infty}}{(q^{-\mu + 1} x/s; q)_{\infty}} .
\end{align*}
(i) The Jackson integral
\begin{align}
& g (x) = x^{- \alpha _1}  \int^{\xi \infty }_{0} s^{-(h'_1 + h'_2 - l'_1 - l'_2 - \alpha '_1 - \alpha '_2 + \beta ' +2)/2} h(s) P^{(1)}_{\mu ,\mu _0 } (x,s)  \, d_{q}s \label{eq:A4Jint}
\end{align}
converges and it satisfies
\begin{align}
& A^{\langle 4 \rangle} ( x;  h_1, h_2, l_1, l_2, \alpha _1, \alpha _2, \beta  ) g (x) =  E g(x) + (1-q) (k_2 (x) - k_1 (x)), \label{eq:thmA4gE1}
\end{align}
where
\begin{align*}
& k_1 (x)= C_1 x^{- \alpha _1} q^{\mu _0 + \alpha _1 + h'_1 + h'_2 + \chi } ( q^{ \beta ' } - 1 ) t_1 t_2 ,  \\
& k_2(x)= C_2 x^{- \alpha _1 } \frac{\vartheta _q (q^{- \mu _0 - \chi } x/\xi  )}{\vartheta _q (q^{- \mu _0 +1 } x/\xi  )} \xi ^{\chi + 1 } q^{\mu _0 + \alpha _1 } ( q^{\alpha ' _2 - \alpha '_1}  -  1 ) . \nonumber
\end{align*}
(ii)  The Jackson integral
\begin{align}
& g(x) = x^{-\alpha_{1}} \int_{0}^{\xi \infty} s^{-(h_{1}' + h_{2}' - l_{1}' - l_{2}' - \alpha_{1}' - \alpha_{2}' + \beta' + 2)/2} h(s) P_{\mu, \mu_{0}}^{(2)} (x, s) {\rm{d}}_{q}s \label{eq:A4Jint2}
\end{align}
converges and it satisfies
\begin{align}
& A^{\langle 4 \rangle} ( x;  h_1, h_2, l_1, l_2, \alpha _1, \alpha _2, \beta  ) g (x) =  E g(x) + (1-q) (k_2 (x) - k_1 (x)), \label{eq:thmA4gE2}
\end{align}
where 
\begin{align*}
& k_{1}(x) = C_{1} x^{- \alpha _1+ \chi +1} \xi ^{ -\chi -1 } \frac{\vartheta_{q} \big(q^{- \mu _0 +1 } x/\xi \big)}{\vartheta_{q} (q^{- \mu _0 -\chi } x/\xi )} q^{\mu_{0 }+ \alpha_{1} + h'_{1} + h'_{2} + \chi } \big( q^{ \beta' } - 1 \big) t_{1} t_{2}, \\
& k_{2}(x) = C_{2} x^{- \alpha _1 + \chi +1} q^{\mu_{0} + \alpha _1 } \big( q^{\alpha'_{2} - \alpha'_{1}} - 1 \big)
 . \nonumber 
\end{align*}
\end{thm}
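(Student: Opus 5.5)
The proof rests on a \emph{kernel function identity}: the operator $A^{\langle 4 \rangle}$ in the variable $x$, applied to the kernel, should equal the adjoint-type operator in the variable $s$. We treat case (i) in detail; case (ii) then follows by the same computation with $P^{(2)}$ in place of $P^{(1)}$. Set
\[
\Phi(x,s)=x^{-\alpha_1}s^{-\kappa}P^{(1)}_{\mu,\mu_0}(x,s),\qquad \kappa=(h'_1+h'_2-l'_1-l'_2-\alpha'_1-\alpha'_2+\beta'+2)/2 .
\]
The first step is to prove
\begin{equation*}
\bigl(A^{\langle 4\rangle}_x(h_1,h_2,l_1,l_2,\alpha_1,\alpha_2,\beta)-E\bigr)\Phi(x,s)
=q^{\mu_0+\alpha_1-\alpha'_1}\bigl(\widetilde A_s-E'\bigr)\Phi(x,s),
\end{equation*}
where $\widetilde A_s$ is the formal $q$-adjoint of $A^{\langle 4\rangle}(s;h'_1,\dots,\beta')$ with respect to the Jackson measure. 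That is, $T_s^{\pm1}$ is replaced by $T_s^{\mp1}$, and the coefficient functions are shifted accordingly: a term $a(s)T_s^{-1}$ becomes $q^{-1}T_s\, a(s)\cdot$, up to the factor $q$ coming from $d_qs$.

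To prove this identity, divide both sides by $\Phi(x,s)$. The shift ratios are rational:
\begin{align*}
\frac{P^{(1)}(x/q,s)}{P^{(1)}(x,s)}&=\frac{1-q^{\mu_0}s/x}{1-q^{\mu}s/x}\ \text{(up to reindexing)},\\
\frac{P^{(1)}(x,qs)}{P^{(1)}(x,s)}&=\frac{1-q^{\mu_0}s/x}{1-q^{\mu}s/x},
\end{align*}
and so on. After dividing and clearing the common denominator $(x-q^{\mu}s)(x-q^{\mu_0-1}s)$ or the analogous one, both sides become Laurent polynomials in $x$ and $s$ of low degree. Comparing coefficients then yields exactly the relations (\ref{eq:chi'}) among $\chi,\mu,\beta,\alpha_2,l_i,h_i$ and $E=q^{\mu_0+\alpha_1-\alpha'_1}E'$. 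This bookkeeping is routine but long.

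Next, apply the operator under the Jackson sum. Convergence comes from (\ref{eq:A4hslim}). As $s=q^L\xi\to0$, $h(s)s^{-\kappa}\to C_1$ and $P^{(1)}\to1$, so the terms decay like $q^L$. As $K\to-\infty$, $h(s)\sim C_2s^{-\alpha'_1}$, and $P^{(1)}$ grows like $(s/x)^{\mu_0-\mu}$ times a theta ratio; the parameter relations make the summand decay geometrically. We may interchange $A_x$ with the sum because $T_x$ acts termwise, also when $\xi=Ax$, since then the shift in $x$ is a reindexing. Using the kernel identity and $\widetilde A_s$, summation by parts on the lattice $q^n\xi$ gives
\[
\sum_n q^n\xi\,\bigl[(\widetilde A_s-E')\Phi\bigr]h=\sum_n q^n\xi\,\Phi\,(A_s-E')h+\text{boundary terms}.
\]
The main sum vanishes by hypothesis. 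What remains is a telescoping difference $\lim_{N\to\infty}(F(q^{N}\xi)-F(q^{-N}\xi))$, where $F$ is bilinear in $h$ and $\Phi$ at adjacent lattice points.

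The main obstacle is evaluating these boundary limits, which produce $k_1$ and $k_2$. At $s\to0$ the limit is $C_1$ times the leading coefficient, which involves $(q^{\beta'}-1)t_1t_2$; this matches $k_1$. At $s\to\infty$ we need the asymptotics of $P^{(1)}(x,q^K\xi)$ as $K\to-\infty$. For this we write $(q^\mu s/x;q)_\infty/(q^{\mu_0}s/x;q)_\infty$ through $\vartheta_q$ and apply Lemma \ref{lem:theta} to extract the power $(s/x)^{\mu-\mu_0}$ together with the $K$-independent ratio $\vartheta_q(q^{-\mu_0-\chi}x/\xi)/\vartheta_q(q^{-\mu_0+1}x/\xi)$. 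Combined with $C_2$ and the exponent relation $\mu-\mu_0=\chi+1$, this yields $k_2$ with the factor $(q^{\alpha'_2-\alpha'_1}-1)$ coming from the leading coefficients of $A^{\langle 4\rangle}$ at infinity.

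For case (ii) we use $P^{(2)}=(x/s)^{\mu-\mu_0}\vartheta$-ratio$\cdot P^{(1)}$-type, which is a quasi-constant multiple of $P^{(1)}$ on the lattice. The same kernel identity therefore holds, and only the roles of the theta factors in the two boundary terms swap, giving the stated $k_1,k_2$.
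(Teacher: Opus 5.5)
Your proposal is correct and follows essentially the same route as the paper, which does not reprove this theorem but quotes it from \cite{TakK}, where it is obtained from a kernel function identity followed by summation by parts on the lattice $q^{\mathbb{Z}}\xi$: the identity $(A_x-E)\Phi=q^{\mu_0+\alpha_1-\alpha'_1}(\widetilde{A}_s-E')\Phi$ does hold exactly, the residues at $x=q^{\mu}s$ and $x=q^{\mu_0-1}s$ fix $h_i,l_i,\alpha_2$ and the factor $q^{\mu_0+\alpha_1-\alpha'_1}$, the remaining Laurent-polynomial parts match precisely by the relations (\ref{eq:chi'}) for $\chi,\beta,E$, and the truncated sums leave the boundary terms $k_1,k_2$ as you describe. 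Two small corrections: with respect to ${\rm d}_qs$ the adjoint of $a(s)T_s^{-1}$ is $q\,T_s\circ a(s)$, not $q^{-1}T_s\circ a(s)$; and part (ii) is cleanest via $R(x,s)=(x/s)^{\chi+1}\vartheta_q(q^{-\mu_0+1}x/s)/\vartheta_q(q^{-\mu_0-\chi}x/s)$, which satisfies $P^{(2)}_{\mu,\mu_0}=R\,P^{(1)}_{\mu,\mu_0}$ and is invariant under $s\mapsto qs$ and $x\mapsto qx$, so that $g^{(2)}=R(x,\xi)\,g^{(1)}$ and $k_i^{(2)}=R(x,\xi)\,k_i^{(1)}$, giving exactly the stated $k_1,k_2$.
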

Note that (\ref{eq:chi'}) is equivalent to 
\begin{align}
    &\chi = (h_{1} + h_{2} - l_{1} - l_{2} + \alpha_{1} - \alpha_{2} - \beta )/2, \quad \mu = \mu_{0} + 1 + \chi,  \label{eq:chi} \\
    &E' = q^{- \mu_{0} + \alpha '_{1} - \alpha_{1}} E, \quad \beta ' = -\beta - \chi, \quad \alpha '_{2} = \alpha ' _{1} - \alpha_{1} + \alpha_{2} + \chi, \nonumber \\
    &l'_{i} = l_{i} - \mu_{0}, \quad h'_{i} = h_{i} - \mu_{0} - \chi, \quad i = 1, 2 \nonumber 
 \end{align}
In the theorem, if $C_1=C_2=0$, then equations (\ref{eq:thmA4gE1}) and (\ref{eq:thmA4gE2}) are written as
\begin{align*}
& A^{\langle 4 \rangle} ( x;  h_1, h_2, l_1, l_2, \alpha _1, \alpha _2, \beta  ) g (x) =  E g(x) ,
\end{align*}
and we obtain the $q$-integral transformations of solutions to the $q$-Heun equations with different parameters.

By applying the $q$-integral transformation in Theorem \ref{thm:q-trans}, some solutions of the $q$-hypergeometric equation and of the non-homogeneous version of the variant of the $q$-hypergeometric equation of degree $2$ were rederived in \cite{TakK}, and these solutions are regarded as special solutions to the $q$-Heun equation.
In this paper, we obtain more solutions to the $q$-Heun equation by using Theorem \ref{thm:q-trans} and the gauge transformation in the next subsection. 

\subsection{Gauge transformation of $q$-Heun equation}
We recall a proposition on the gauge transformation of the $q$-Heun equation, which will be used later.
\begin{prop} [{cf.\ \cite[Proposition 5.2]{TakK}}] \label{prop:GT}
Let $(i,j) = (1,2)$ or $(2,1)$. 
If the function $f(x) $ satisfies
\begin{align}
& x^{-1} (x-q^{l_i + 1/2} t_i) (x-q^{h_j +1/2} t_j) f(x/q) +  q^{\alpha _1 +\alpha _2} x^{-1} (x - q^{h_i -1/2} t_i) (x - q^{l_j -1/2} t_j) f(qx)  \nonumber \\
& -\{ (q^{\alpha _1} +q^{\alpha _2} ) x  + q^{(h_1 +h_2 + l_1 +l_2 +\alpha _1 +\alpha _2 )/2} ( q^{\beta/2} + q^{-\beta/2} ) t_1 t_2 x^{-1} \} f(x) = E f(x)  \nonumber 
\end{align}
and
\begin{align}
& g_1(x) = \frac{(q^{h_i + 1/2} t_i/x; q)_{\infty }}{(q^{l_i + 1/2} t_i/x; q)_{\infty }} f(x) , \; g_2 (x) =  x ^{h_i -l_i} \frac{(x/(q^{l_i - 1/2} t_i); q)_{\infty }}{(x/(q^{h_i - 1/2} t_i) ; q)_{\infty }} f(x),  \nonumber 
\end{align}
then the functions $g(x)= g_1 (x)$ and $g(x)= g_2 (x)$ satisfy
\begin{align}
& x^{-1} (x-q^{h_1 + 1/2} t_1) (x-q^{h_2 +1/2} t_2) g(x/q) +  q^{\alpha _1 +\alpha _2} x^{-1} (x - q^{l_1 -1/2} t_1) (x - q^{l_2 -1/2} t_2) g(qx)  \nonumber \\
& -\{ (q^{\alpha _1} +q^{\alpha _2} ) x  + q^{(h_1 +h_2 + l_1 +l_2 +\alpha _1 +\alpha _2 )/2} ( q^{\beta/2} + q^{-\beta/2} ) t_1 t_2 x^{-1} \} g(x) = E g(x) . \nonumber 
\end{align}
\end{prop}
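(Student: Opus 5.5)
The plan is to prove Proposition \ref{prop:GT} by direct substitution. I write the original equation for $f$ and express $f$ in terms of $g$, using $f(x)=\phi(x)^{-1}g(x)$ with $\phi$ the gauge factor. I then compute the ratios $\phi(x)/\phi(x/q)$ and $\phi(x)/\phi(qx)$. Because $\phi$ is a ratio of infinite $q$-products, these ratios are rational functions of $x$, and each involves only one linear factor in the numerator and one in the denominator. Multiplying the $f$-equation by $\phi(x)$ turns it into an equation for $g$. The coefficient of $g(x)$ is unchanged, and the coefficients of $g(x/q)$ and $g(qx)$ pick up these rational factors.

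For $g_1$, take $\phi(x)=(q^{h_i+1/2}t_i/x;q)_\infty/(q^{l_i+1/2}t_i/x;q)_\infty$. Replacing $x$ by $x/q$ turns $q^{h_i+1/2}t_i/x$ into $q^{h_i+3/2}t_i/x$. Hence
\[
\frac{\phi(x)}{\phi(x/q)}=\frac{1-q^{h_i+1/2}t_i/x}{1-q^{l_i+1/2}t_i/x}=\frac{x-q^{h_i+1/2}t_i}{x-q^{l_i+1/2}t_i}.
\]
In the $g(x/q)$ term of the $f$-equation, $f(x/q)=g(x/q)/\phi(x/q)$, so multiplying by $\phi(x)$ gives the factor $\phi(x)/\phi(x/q)$. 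This cancels $(x-q^{l_i+1/2}t_i)$ and replaces it by $(x-q^{h_i+1/2}t_i)$, which is the required coefficient. Similarly,
\[
\frac{\phi(x)}{\phi(qx)}=\frac{1-q^{l_i-1/2}t_i/x}{1-q^{h_i-1/2}t_i/x}=\frac{x-q^{l_i-1/2}t_i}{x-q^{h_i-1/2}t_i}.
\]
This converts $(x-q^{h_i-1/2}t_i)$ in the $g(qx)$ coefficient into $(x-q^{l_i-1/2}t_i)$. The $j$-factors and the $g(x)$ coefficient are untouched, and the constant term depends only on the symmetric combination $h_1+h_2+l_1+l_2$, so it agrees with the target equation.

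For $g_2$, take $\phi(x)=x^{h_i-l_i}(x/(q^{l_i-1/2}t_i);q)_\infty/(x/(q^{h_i-1/2}t_i);q)_\infty$. Now
\[
\frac{\phi(x)}{\phi(x/q)}=q^{h_i-l_i}\,\frac{(x/(q^{l_i-1/2}t_i);q)_\infty\,(x/(q^{h_i+1/2}t_i);q)_\infty}{(x/(q^{h_i-1/2}t_i);q)_\infty\,(x/(q^{l_i+1/2}t_i);q)_\infty}.
\]
Since $(x/(q^{l_i+1/2}t_i);q)_\infty=(1-x/(q^{l_i+1/2}t_i))(x/(q^{l_i-1/2}t_i);q)_\infty$, and similarly for the $h_i$-terms, the products telescope to
\[
q^{h_i-l_i}\,\frac{1-x q^{-h_i-1/2}/t_i}{1-x q^{-l_i-1/2}/t_i}=\frac{x-q^{h_i+1/2}t_i}{x-q^{l_i+1/2}t_i}.
\]
This is the same ratio as before. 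The $qx$ ratio is handled the same way and gives $(x-q^{l_i-1/2}t_i)/(x-q^{h_i-1/2}t_i)$, again matching the $g_1$ case. So $g_2$ satisfies the same equation.

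The only real obstacle is the bookkeeping in the $g_2$ case: the power $x^{h_i-l_i}$ contributes $q^{\pm(h_i-l_i)}$, and this must exactly cancel the constants produced when $(1-x/(q^{c}t_i))$ is rewritten as a multiple of $(x-q^{c}t_i)$. I would verify this sign and power carefully; everything else is a one-line telescoping of infinite products.
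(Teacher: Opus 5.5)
Your proposal is correct. For both gauge factors one gets $\phi(x)/\phi(x/q)=(x-q^{h_i+1/2}t_i)/(x-q^{l_i+1/2}t_i)$ and $\phi(x)/\phi(qx)=(x-q^{l_i-1/2}t_i)/(x-q^{h_i-1/2}t_i)$. For $g_2$, the factor $q^{h_i-l_i}$ coming from $x^{h_i-l_i}$ is exactly cancelled by the $q^{l_i-h_i}$ produced when each $1-x/(q^{c}t_i)$ is rewritten as $-(x-q^{c}t_i)/(q^{c}t_i)$. The $g(x)$-coefficient is untouched because it depends on $h_1+h_2+l_1+l_2$ symmetrically.

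The paper gives no proof of its own here, deferring to \cite[Proposition 5.2]{TakK}. Your direct substitution via $(a;q)_\infty=(1-a)(aq;q)_\infty$ is the standard verification behind that result.
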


\section{Special solutions of $q$-Heun equation} \label{sec:q-Int}
\subsection{Power series solutions of $q$-Heun equation and related solutions} \label{sec:pow}

Let us consider solutions to the $q$-Heun equation
  \begin{align}
    A^{\langle 4 \rangle} (x; h''_{1}, h''_{2}, l''_{1}, l''_{2}, \alpha ''_{1}, \alpha ''_{2}, \beta '') g(x) = E'' g(x), \quad \text{$E'' \in \mathbb{C}$}. \label{eq:q-Heun''}
  \end{align}
This is equivalent to 
\begin{align}
& (x-q^{h''_1 + 1/2} t_1) (x-q^{h''_2 +1/2} t_2) g(x/q) +  q^{\alpha ''_1 +\alpha ''_2} (x - q^{l''_1 -1/2} t_1) (x - q^{l''_2 -1/2} t_2) g(qx)  \label{eq:q-Heun''2} \\
& -\{ (q^{\alpha ''_1} +q^{\alpha ''_2} ) x^2 + E'' x  + q^{(h''_1 +h''_2 + l''_1 +l''_2 +\alpha ''_1 +\alpha ''_2 )/2} ( q^{\beta ''/2} + q^{-\beta ''/2} ) t_1 t_2  \} g(x) = 0 . \nonumber 
\end{align}
Set 
\begin{equation}
\lambda_{1}'' = (h_{1}'' + h_{2}'' - l_{1}'' - l_{2}'' - \alpha_{1}'' - \alpha_{2}'' - \beta'' + 2)/2. \label{eq:lamabda''}
\end{equation}
The condition that the formal series 
\begin{equation*}
g(x) =  x^{\lambda ''_{1}} \sum_{n =0}^{\infty } c_{n} x^{n} 
\end{equation*}
 is a solution to equation (\ref{eq:q-Heun''2}) is equivalent to that the equality
\begin{align}
 & c_{n} t_{1}t_{2}q^{  1- n + h''_{1} + h''_{2} - \lambda ''_{1}} (1 - q^{n})(1 - q^{n - \beta '' }) \label{eq:recrelcn''} \\
 & \ - c_{n - 1} [E'' + q^{3/2- n - \lambda ''_{1} } (q^{h''_{1}}t_{1} + q^{h''_{2}}t_{2} ) + q^{n - 3/2 + \lambda ''_{1} + \alpha ''_{1} + \alpha ''_{2} }( q^{l''_{1}}t_{1} + q^{l''_{2}}t_{2} ) ] \notag \\
 & \ + c_{n - 2} q^{ 2- n - \lambda ''_{1} } (1 - q^{n - 2 + \lambda ''_{1} + \alpha ''_{1} })(1 - q^{n - 2 + \lambda ''_{1} + \alpha ''_{2} }) = 0 . \notag
  \end{align}
holds for $n = 1 ,2,3, \cdots $, where $c_{-1} =0$.
Note that the equality for $n=0$ also holds for any $c_0 \in \mathbb{C}$ by setting $c_{-2} =0$.
If $n \in \mathbb{Z} _{>0}$ and $\beta '' \not \in \{ 1,2, \dots ,n \}$, then the coefficient $c_n$ is determined uniquely by the recursive relation (\ref{eq:recrelcn''}).

We recall the polynomial-type solutions.
\begin{prop} [{cf.\ \cite[Proposition 2.2]{KST}}] \label{prop:polynom}
Let $\lambda_{1}'' = (h_{1}'' + h_{2}'' - l_{1}'' - l_{2}'' - \alpha_{1}'' - \alpha_{2}'' - \beta'' + 2)/2$, $\alpha '' \in \{  \alpha ''_1 , \alpha ''_2 \}$ and assume that $-\lambda ''_1 - \alpha '' (=: N)$ is a non-negative integer and $\beta '' \not \in \{ 1,2,\dots ,N\}$.
Set $c_{-1}(E'')=0 $,  $c_0(E'')=1 $,
\begin{align}
 & x''_n =  t_{1}t_{2}q^{  1- n + h''_{1} + h''_{2} - \lambda ''_{1}} (1 - q^{n})(1 - q^{n - \beta '' }) \label{eq:xyz''} \\
 & y''_n =  q^{3/2- n - \lambda ''_{1} } (q^{h''_{1}}t_{1} + q^{h''_{2}}t_{2} ) + q^{n - 3/2 + \lambda ''_{1} + \alpha ''_{1} + \alpha ''_{2} }( q^{l''_{1}}t_{1} + q^{l''_{2}}t_{2} )  \notag \\
 & z''_n = q^{ 2- n - \lambda ''_{1} } (1 - q^{n - 2 + \lambda ''_{1} + \alpha ''_{1} })(1 - q^{n - 2 + \lambda ''_{1} + \alpha ''_{2} }) \notag
\end{align}
and we determine the polynomials $c_n(E'')$ $(n=1,\dots ,N)$ recursively by
\begin{align}
 & c_{n} (E'') x''_n = c_{n - 1} (E'') [E'' + y''_n ] -c_{n-2} (E'') z''_n . \label{eq:cnxyz''}
\end{align}
We define the monic polynomial $c (E'')$ of degree $N+1$ by
\begin{align}
 & c (E'') = x''_1 x''_2 \cdots x''_{N} [ c_{N} (E'') \{ E'' + y''_{N+1} \} -c_{N-1} (E'') z''_{N+1} ] . \label{eq:cxyz''}
\end{align}
Assume that $E''=E''_0$ is a solution of the algebraic equation
\begin{equation}
c (E'')=0 .
\label{eq:cN+1}
\end{equation}
Then the $q$-Heun equation (\ref{eq:q-Heun''}) has a non-zero solution of the form
\begin{equation}
g(x)= x^{\lambda ''_1} \sum _{n=0}^{N } c_n (E''_0)  x^n .
\label{eq:gxxlapol}
\end{equation}
\end{prop}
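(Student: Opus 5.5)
The plan is to show that, once the recursion (\ref{eq:recrelcn''}) is cut off at degree $N$, the remaining coefficient conditions are exactly $c(E'')=0$. I substitute the finite sum $g(x)=x^{\lambda''_1}\sum_{n=0}^{N} c_n x^n$, with $c_n=c_n(E''_0)$, into (\ref{eq:q-Heun''2}). By the computation behind (\ref{eq:recrelcn''}), the coefficient of $x^{\lambda''_1+n}$ in the left-hand side is, up to an overall sign,
\[
c_n x''_n - c_{n-1}(E''_0+y''_n) + c_{n-2} z''_n ,
\]
where $c_m=0$ for $m<0$ and for $m>N$. Since $g$ has degree at most $N$ after the factor $x^{\lambda''_1}$, and the equation raises degree by at most two, only $n=0,1,\dots,N+2$ can contribute.

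I then check these coefficients one range at a time.
\begin{itemize}
\item For $n=0$ the expression vanishes because $x''_0=0$, since it contains the factor $(1-q^0)$.
\item For $1\le n\le N$ it vanishes by the defining recursion (\ref{eq:cnxyz''}). This step needs $x''_n\neq 0$, so that the $c_n(E'')$ are well-defined polynomials. That holds because $q^n\neq1$ for $n\ge1$ and $\beta''\notin\{1,\dots,N\}$.
\item For $n=N+1$ we have $c_{N+1}=0$, so the coefficient is $-[c_N(E''_0+y''_{N+1})-c_{N-1}z''_{N+1}]$. By (\ref{eq:cxyz''}) this equals $-c(E''_0)/(x''_1\cdots x''_N)$, which is $0$ by (\ref{eq:cN+1}).
\item For $n=N+2$ the coefficient reduces to $c_N z''_{N+2}$, and here the hypothesis $-\lambda''_1-\alpha''=N$ is used. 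Indeed $z''_{N+2}$ contains the factor $(1-q^{N+\lambda''_1+\alpha''})=1-q^0=0$, so this coefficient also vanishes.
\end{itemize}
All coefficients therefore vanish and $g$ solves (\ref{eq:q-Heun''}).

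It remains to see that $g$ is non-zero and that $c(E'')$ is monic of degree $N+1$. Non-vanishing is immediate because $c_0=1$. For the degree, induction on (\ref{eq:cnxyz''}) shows $c_n(E'')$ has degree $n$ with leading coefficient $1/(x''_1\cdots x''_n)$. Multiplying by $x''_1\cdots x''_N$ in (\ref{eq:cxyz''}) then gives a monic polynomial of degree $N+1$.

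The only delicate step is the bookkeeping that pins the top coefficient $n=N+2$ to the vanishing factor of $z''$. This is exactly where $N=-\lambda''_1-\alpha''$ enters, and I expect it to be the main point to get right. I will confirm it by tracking the $x^2$ terms in (\ref{eq:q-Heun''2}): their combined contribution on $x^{\lambda''_1+n-2}$ is $q^{-(n-2)-\lambda''_1}+q^{\alpha''_1+\alpha''_2+n-2+\lambda''_1}-(q^{\alpha''_1}+q^{\alpha''_2})$. This factors as $q^{2-n-\lambda''_1}(1-q^{n-2+\lambda''_1+\alpha''_1})(1-q^{n-2+\lambda''_1+\alpha''_2})=z''_n$, as required.
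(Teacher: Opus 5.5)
Your proposal is correct and follows the paper's proof. The paper likewise sets $c_n=0$ outside $0\le n\le N$ and verifies the recursion (\ref{eq:recrelcn''}) in three ranges: for $1\le n\le N$ by (\ref{eq:cnxyz''}), for $n=N+1$ from $c(E''_0)=0$, and for $n=N+2$ from the vanishing factor $1-q^{N+\lambda''_1+\alpha''}$ in $z''_{N+2}$. Your additional checks (the $n=0$ case, $x''_n\neq 0$, the derivation of $z''_n$, and that $c(E'')$ is monic of degree $N+1$) are all correct and fill in details the paper takes for granted.
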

\begin{proof}
We set $c_n = c_n (E''_0)$ for  $n=0,1,\dots ,N$ and $c_n =0$ otherwise.
It is enough to show equation (\ref{eq:recrelcn''}) for $n = 1 ,2, \cdots , N+2$.
If $n = 1 ,2, \cdots , N$, then equation (\ref{eq:recrelcn''}) follows from equation (\ref{eq:cnxyz''}).
We have $c (E'' _0)=0$, and it follows from equation (\ref{eq:cnxyz''}) and $c_{N+1} =0$ that equation (\ref{eq:recrelcn''}) for $n = N+1$ holds.
On equation (\ref{eq:recrelcn''}) for $n = N+2$, the coefficient of $c_N $ is equal to $0$, because $N+ \lambda ''_1 + \alpha '' _1= 0$ or $N+ \lambda ''_1 + \alpha '' _2= 0$.
Hence, equation (\ref{eq:recrelcn''}) for $n = N+2$ holds.
\end{proof}
If $N=0$ in Proposition \ref{prop:polynom}, then $c (E'') = E'' + y''_1$ and the function $x^{\lambda ''_1}$ is a solution to the $q$-Heun equation (\ref{eq:q-Heun''}) under the condition $E'' =- y''_1 $.

We have
\begin{align*}
 & c_{1}(E'') = \frac{E'' + y''_1}{x''_1} , \\
 & c_{2}(E'') = \frac{(E'' + y''_1 )  (E'' + y'' _2 )}{x''_1 x''_2}  - \frac{z''_2}{x''_2} , \notag \\
 & c_{3}(E'') = \frac{(E'' + y''_1 ) (E'' + y''_2) (E'' + y''_3) }{x''_1 x''_2 x''_3} - \frac{z''_2 (E'' + y''_3 )}{x''_2 x''_3} - \frac{z''_3 (E'' + y''_1) }{x''_1 x''_3} , \notag \\
 & c_{4}(E'') = \frac{(E'' + y''_1 ) (E'' + y''_2) (E'' + y''_3) (E'' + y''_4) }{x''_1 x''_2 x''_3 x''_4} - \frac{z''_2 (E'' + y''_3 ) (E'' + y''_4) }{x''_2 x''_3 x''_4}  \notag \\
& \qquad \qquad - \frac{z''_3 (E'' + y''_1) (E'' + y''_4)}{x''_1 x''_3 x''_4} - \frac{z''_4 (E'' + y''_1 )  (E'' + y'' _2 )}{x''_1 x''_2 x''_4}  + \frac{z''_2 z''_4}{x''_2 x''_4} ,  \notag 
\end{align*}
\begin{align*}
 & c (E'') = \left\{
\begin{array}{ll} 
E'' + y''_1 , & N=0 , \\
(E'' + y''_1 )  (E'' + y'' _2 ) - x''_1 z''_2 , & N=1 ,
\end{array}
\right. \\
& c (E'') =  (E'' + y''_1 ) (E'' + y''_2) (E'' + y''_3)  - x''_1 z''_2 (E'' + y''_3 ) - x''_2 z''_3 (E'' + y''_1) 
\end{align*}
for $N=2$, and
\begin{align*}
 c (E'') = & (E'' + y''_1 ) (E'' + y''_2) (E'' + y''_3) (E'' + y''_4) - x''_1 z''_2 (E'' + y''_3 ) (E'' + y''_4) \\
& - x''_2 z''_3 (E'' + y''_1) (E'' + y''_4) - x''_3 z''_4 (E'' + y''_1 ) (E'' + y'' _2 ) + x''_1 z''_2 x''_3 z''_4 \notag 
\end{align*}
for $N=3$.
If $N $ is a non-negative integer, then we have
\begin{align}
 & c (E'') = \sum (-1) ^k x''_{i_1} z'' _{i_1 +1 } x''_{i_2} z'' _{i_2 +1 } \cdots x''_{i_k} z'' _{i_k +1 }  \prod _{j \in \{ 1,2, \dots ,N +1 \} \setminus \atop{\{ i_1 , i_1 +1 , \dots , i_k , i_k +1  \} }} (E'' + y''_j ) , \label{eq:cE''sum}
\end{align}
where the summation is over the integers $1 \leq i_1 <i_2 < \dots <i_k \leq N$ such that $i_{l+1} -i_{l} \geq 2$ for $l =1, \dots , k-1$, which includes the case $k=0$.

\subsection{Apparent singularity} \label{sec:appsing}
We investigate the series solution
\begin{equation}
g(x) = x^{\lambda ''_{1}} \sum_{n =0}^{\infty } c_{n} x^{n}  \label{eq:sollam''}
\end{equation}
to the $q$-Heun equation (\ref{eq:q-Heun''}) in the case that $\beta '' $ is a positive integer.
Note that the value $\lambda ''$ is fixed by equation (\ref{eq:lamabda''}) and the exponents of equation (\ref{eq:q-Heun''2}) at $x=0$ are $\lambda ''_{1} $ and $\lambda ''_{1} +\beta ''$ (see \cite{TakqH} for the definition of exponents), and the value $\beta ''  $ is the difference of the exponents.
Set $\beta '' =N+1$ and assume that $N$ is a non-negative integer.
Then, the coefficients $c_n$ for $n=1,\dots ,N$ are determined by equation (\ref{eq:recrelcn''}), and equation (\ref{eq:recrelcn''}) for $n=N+1$ is written as 
\begin{align}
 & \ - c_{N} [E'' + q^{1/2- N - \lambda ''_{1} } (q^{h''_{1}}t_{1} + q^{h''_{2}}t_{2} ) + q^{N - 1/2 + \lambda ''_{1} + \alpha ''_{1} + \alpha ''_{2} }( q^{l''_{1}}t_{1} + q^{l''_{2}}t_{2} ) ] \label{eq:recrelcN+1''} \\
 & \ + c_{N - 1} q^{ 1- N - \lambda ''_{1} } (1 - q^{N - 1 + \lambda ''_{1} + \alpha ''_{1} })(1 - q^{N - 1 + \lambda ''_{1} + \alpha ''_{2} }) = 0 . \notag
  \end{align}
If the left hand side of equation (\ref{eq:recrelcN+1''}) is not equal to $0$, then a contradiction occurs and there is no solution written as equation (\ref{eq:sollam''}) with the condition $c_0 \neq 0$.
If the left hand side of equation (\ref{eq:recrelcN+1''}) is equal to $0$, then the coefficient $c_{N+1}$ can be taken any value, and the coefficients $c_n$ for $n=N+2,N+3, \cdots $ are determined recursively.
In this case, equation (\ref{eq:recrelcN+1''}) is equivalent to $c (E'') = 0$ described in Proposition \ref{prop:polynom}, and the dimension of the solutions in the form of equation (\ref{eq:sollam''}) is two.
This case corresponds to the apparency of the regular singularity $x=0$, which is analogues to the apparent singularity of the linear ordinary differential equation.

\subsection{First family of solutions} \label{sec:1sol}

We apply a single gauge transformation and a $q$-integral transformation to the polynomial-type solutions of the $q$-Heun equation.
Then, we obtain solutions of the $q$-Heun equation with other parameters, if the conditions in Theorem \ref{thm:q-trans} hold.
We consider this procedure attentively.
\begin{thm} \label{thm:polynom1gauge}
Assume that $( h_{1}' - h_{2}' -l_{1}' + l_{2}' + \alpha_{1}' - \alpha_{2}' + \beta' - 2)/2 (=N)$ is a non-negative integer.
Let $c_n(E')$ $(n=1,\dots ,N)$ and $c (E')$ be the polynomials determined by Proposition \ref{prop:polynom}, where
\begin{align*}
h''_{1} = l' _1, \;  l''_{1} = h' _1, \; h''_{2} = h' _2, \; l''_{2} = l' _2, \; \alpha ''_{1} = \alpha '_{1} , \; \alpha ''_{2} = \alpha '_{2} , \; \beta '' = \beta ' , \;  E''=E'.
\end{align*}
Let $E'_0$ be a zero of the polynomial $c (E'') $, i.e.~$c(E'_0 )=0$.
\\
(i) The functions 
 \begin{align} \label{eq:1h1h2}
& h_1 (s) = s^{(h_{1}' + h_{2}' - l_{1}' - l_{2}' - \alpha_{1}' - \alpha_{2}' - \beta' + 2)/2 } \frac{(s/(q^{l_{1}' - 1/2}t_{1}); q)_\infty}{(s/(q^{h_{1}' - 1/2}t_{1}); q)_\infty} \sum_{n = 0}^{N} c_{n} (E'_0) s^{n} ,\\
& h_2 (s)= s^{- \alpha_{2}' -N} \frac{(q^{h_{1}' + 1/2}t_{1}/s; q)_\infty}{(q^{l_{1}' + 1/2}t_{1}/s; q)_\infty} \sum_{n = 0}^{N} c_{n} (E'_0) s^{n} \notag
  \end{align}
satisfy $A^{\langle 4 \rangle} (s; h_{1}', h_{2}', l_{1}', l_{2}', \alpha_{1}', \alpha_{2}', \beta') h _i (s) = E_{0}' h_i (s)$ $(i=1,2)$.
\\
(ii) Let $\xi $ be the variable which is independent of the variable $x$ or proportional to $x$ (i.~e.~$\xi =Ax$ where $A $ is independent of $x$).
If $\beta' < 0, \, \alpha_{1}' < \alpha_{2}'$, then the Jackson integrals
    \begin{align}
      &g _1 (x) = (1 - q)x^{-\alpha_{1}} \frac{(q^{-l_{1}' + 1/2}\xi/t_{1}, q^{-h_{2}' + l_{2}' - N}\xi/x; q)_{\infty}}{(q^{-h_{1}' + 1/2}\xi/t_{1}, \xi/x; q)_{\infty}} \label{eq:1g1g2} \\
      &\quad \times \sum_{n = -\infty}^{\infty} \frac{(q^{-h_{1}' + 1/2}\xi/t_{1}, \xi/x; q)_{n}}{(q^{-l_{1}' + 1/2}\xi/t_{1}, q^{-h_{2}' + l_{2}' - N}\xi/x; q)_{n}} \sum_{k = 0}^{N} q^{(-\beta' + 1 +k )n} \xi^{-\beta' + 1 +k} c_{k} (E'_0 ) , \notag \\
      &g _2 (x) = (1 - q) x^{-\alpha_{1} - h_{2}' + l_{2}' - N} \frac{(q^{h_{1}' + 1/2}t_{1}/\xi, qx/\xi; q)_{\infty}}{(q^{l_{1}' + 1/2}t_{1}/\xi, q^{h_{2}' - l_{2}' + N + 1}x/\xi; q)_{\infty}}  \notag \\
      &\; \times \sum_{n = -\infty}^{\infty} \frac{(q^{l_{1}' + 1/2}t_{1}/\xi, q^{h_{2}' - l_{2}' + N + 1}x/\xi; q)_{n}}{(q^{h_{1}' + 1/2}t_{1}/\xi, qx/\xi; q)_{n}}  \sum_{k = 0}^{N} q^{(-\alpha_{1}' + \alpha_{2}' + 1 + N -k )n} \xi^{\alpha_{1}' - \alpha_{2}' - 1 - N + k}  c_{k} (E'_0 ) \notag
    \end{align}
converge and satisfy the $q$-Heun equation
\begin{align*}
& A^{\langle 4 \rangle} (x; h_{1}, h_{2}, l_{1}, l_{2}, \alpha_{1}, \alpha_{2}, \beta) g_i (x) = E g _i (x) 
\end{align*}
for $i=1,2$, whose parameters are given by
\begin{align}
& \alpha_{2} = \alpha_{1} - \alpha_{1}' + \alpha_{2}' + h_{2}' - l_{2}' + 1 + N , \; \beta = h_{1}' - l_{1}' + \alpha_{1}' - \alpha_{2}' - 1 - N, \label{eq:1param} \\
& l_{1} = l_{1}' , \; l_{2} = l_{2}', \;  h_{1} = h_{1}' -h_{2}' + l_{2}' - 1 - N, \; h_{2} = l_{2}' - 1 - N, \; E = q^{\alpha_{1} - \alpha_{1}'}E_{0}' . \notag
\end{align}
\end{thm}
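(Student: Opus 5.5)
For part (i), I would apply Proposition \ref{prop:polynom} with the double-primed parameters $h''_1=l'_1$, $l''_1=h'_1$, $h''_2=h'_2$, $l''_2=l'_2$, $\alpha''_j=\alpha'_j$, $\beta''=\beta'$ and $\alpha''=\alpha'_2$. First I check that $-\lambda''_1-\alpha'_2$ equals the integer $N$ of the hypothesis. Here $\lambda''_1=(l'_1+h'_2-h'_1-l'_2-\alpha'_1-\alpha'_2-\beta'+2)/2$, so $-\lambda''_1-\alpha'_2=(h'_1-h'_2-l'_1+l'_2+\alpha'_1-\alpha'_2+\beta'-2)/2=N$. The condition $\beta''\notin\{1,\dots,N\}$ is automatic in part (ii), where $\beta'<0$. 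In part (i) it is needed for $c_n(E')$ to be defined at all, so I regard it as implicit.

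Proposition \ref{prop:polynom} then gives the solution $f(s)=s^{\lambda''_1}\sum_{n=0}^N c_n(E'_0)s^n$ of the $q$-Heun equation with $h_1,l_1$ interchanged. This is exactly the twisted equation of Proposition \ref{prop:GT} with $(i,j)=(1,2)$. The gauge factor $g_2$ of that proposition, namely $s^{h'_1-l'_1}(s/(q^{l'_1-1/2}t_1);q)_\infty/(s/(q^{h'_1-1/2}t_1);q)_\infty$, turns $f$ into $h_1(s)$, since $\lambda''_1+h'_1-l'_1$ is the exponent written in (\ref{eq:1h1h2}). The factor $g_1$ turns $f$ into $h_2(s)$, since $\lambda''_1=-\alpha'_2-N$. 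Hence both $h_1$ and $h_2$ satisfy $A^{\langle 4\rangle}(s;h'_1,h'_2,l'_1,l'_2,\alpha'_1,\alpha'_2,\beta')h=E'_0h$.

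For part (ii), I would apply Theorem \ref{thm:q-trans}(i) to $h_1$ to get $g_1$, and Theorem \ref{thm:q-trans}(ii) to $h_2$ to get $g_2$. The parameters will come from (\ref{eq:chi'}) with the choice $\mu_0=0$, which is suggested by $l_i=l'_i$. Then $\chi=(l'_1+l'_2-h'_1-h'_2-\alpha'_1+\alpha'_2-\beta')/2=l'_2-h'_2-N-1$, so $\mu=l'_2-h'_2-N$. This reproduces (\ref{eq:1param}); for example $h_2=h'_2+\chi=l'_2-1-N$ and $\beta=-\beta'-\chi$, after rewriting with the definition of $N$. The main verification is that $C_1=C_2=0$ in (\ref{eq:A4hslim}), so that the inhomogeneous terms vanish.

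Write $\rho=(h'_1+h'_2-l'_1-l'_2-\alpha'_1-\alpha'_2+\beta'+2)/2$. As $s\to0$, $h_1(s)\sim s^{\lambda''_1+h'_1-l'_1}$, and $\lambda''_1+h'_1-l'_1-\rho=-\beta'>0$, so $C_1=0$. As $s\to\infty$ along $q^K\xi$, the ratio of infinite products is handled by Lemma \ref{lem:theta}. It contributes a factor $s^{h'_1-l'_1}$ times a bounded theta quotient, so $h_1(s)s^{\alpha'_1}$ grows like $s^{\lambda''_1+h'_1-l'_1+N+\alpha'_1}=s^{\alpha'_1-\alpha'_2}\to0$, giving $C_2=0$. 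For $h_2$ the analogous exponents are again $-\beta'$ at $0$ and $\alpha'_1-\alpha'_2$ at $\infty$, so both limits vanish. These are precisely the hypotheses $\beta'<0$ and $\alpha'_1<\alpha'_2$. Plugging into the Jackson integrals (\ref{eq:A4Jint}) and (\ref{eq:A4Jint2}) with $s=q^n\xi$, I would combine the infinite products. For instance $(q^{\mu}s/x;q)_\infty/(s/x;q)_\infty$ and the gauge products are converted via (\ref{eq:GR1.2.30}) into the prefactor times ratios of $(\cdot;q)_n$. This produces (\ref{eq:1g1g2}).

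I expect the main obstacle to be the bookkeeping in this last step. One has to track the powers $q^{(-\beta'+1+k)n}\xi^{-\beta'+1+k}$ arising from $s\cdot s^{-\rho}h(s)$, and for $g_2$ also the factor $(x/s)^{\mu}$, while matching the exponent identities for $\rho$ and $\lambda''_1$ exactly.
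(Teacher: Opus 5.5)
Your proposal is correct and follows the paper's proof essentially step for step: Proposition~\ref{prop:polynom} with $h_1''=l_1'$, $l_1''=h_1'$ plus the two gauge factors of Proposition~\ref{prop:GT} for (i), and for (ii) Theorem~\ref{thm:q-trans} with $\mu_0=0$, the checks $C_1=C_2=0$ via Lemma~\ref{lem:theta}, and the rewriting of the Jackson integrals with (\ref{eq:GR1.2.30}). There is one slip to fix in the $C_2$ check for $h_1$: at $s=q^K\xi$, $K\to-\infty$, the product ratio contributes $(s/\xi)^{l_1'-h_1'}$ times a $K$-independent theta quotient, not $s^{h_1'-l_1'}$ (that factor is the one appearing for $h_2$ as $s\to0$); it therefore cancels the gauge power, so $h_1(s)s^{\alpha_1'}$ behaves like $s^{\lambda_1''+N+\alpha_1'}=s^{\alpha_1'-\alpha_2'}$, whereas the exponent $\lambda_1''+h_1'-l_1'+N+\alpha_1'$ you wrote equals $\alpha_1'-\alpha_2'+h_1'-l_1'$ --- your final conclusion is nevertheless correct.
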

Note that the condition $h_{2} = l_{2} - 1 - N$ is satisfied on the parameters in the theorem.
\begin{proof}
Set $\lambda_{1}'' = (h_{1}'' + h_{2}'' - l_{1}'' - l_{2}'' - \alpha_{1}'' - \alpha_{2}'' - \beta'' + 2)/2$ and $\lambda ' = (l_{1}'  - l_{2}' - h_{1}' + h_{2}' - \alpha_{1}' - \alpha_{2}' - \beta' + 2)/2$.
Then, we have $\lambda_{1}'' = \lambda ' $ and the assumption of $N$ is written as $-\lambda ' - \alpha_{2}' = - \lambda_{1}''  - \alpha_{2}'' =N$.
It follows from Proposition \ref{prop:polynom} that
  \begin{align*}
  A^{\langle 4 \rangle} (s; l_{1}', h_{2}', h_{1}', l_{2}', \alpha_{1}', \alpha_{2}', \beta') s^{\lambda '} \sum_{n = 0}^{N} c_{n}(E_{0}')s^{n} = E_{0}' s^{\lambda '} \sum_{n = 0}^{N} c_{n} (E_{0}') s^{n} .
  \end{align*}
We set $ f(s) = s^{\lambda '} \sum_{n = 0}^{N} c_{n}(E_{0}')s^{n}$ and we apply Proposition \ref{prop:GT}.
Then, we obtain (i).

We show (ii).
To apply Theorem \ref{thm:q-trans} (i) for the function $h_1(s)$ in equation (\ref{eq:1h1h2}), we look into the assumption of Theorem \ref{thm:q-trans} in this setting.
Namely, we investigate equation (\ref{eq:A4hslim}) for $h(s)=h_1(s)$.
If $s= q^{L}\xi$, then
  \begin{align*}
    &h(s) s^{-(h_{1}' + h_{2}' - l_{1}' - l_{2}' - \alpha_{1}' - \alpha_{2}' + \beta' + 2)/2} = (q^{L}\xi)^{-\beta'} \frac{(q^{L - l_{1}' + 1/2}\xi/t_{1}; q)_{\infty}}{(q^{L - h_{1}' + 1/2}\xi/t_{1}; q)_{\infty}} \sum_{n = 0}^{N} c_{n}(E_{0}')  (q^{L}\xi)^{n} .
  \end{align*}
Since
  \begin{align*}
 \frac{(q^{L - l_{1}' + 1/2}\xi/t_{1}; q)_{\infty}}{(q^{L - h_{1}' + 1/2}\xi/t_{1}; q)_{\infty}} \to 1, \; \sum_{n = 0}^{N} c_{n} (E_{0}')  (q^{L}\xi)^{n} \to c_{0} \quad \mbox{as } L \to \infty,
  \end{align*}
a sufficient condition for $C_1=0$ in equation (\ref{eq:A4hslim}) is the condition $\beta' < 0 $.

If $s= q^{K}\xi$, then
  \begin{align*}
    h(s) s^{\alpha_{1}'} 
    &= (q^{K}\xi)^{h_{1}' - l_{1}'} \frac{(q^{K - l_{1}' + 1/2}\xi/t_{1}; q)_\infty}{(q^{K - h_{1}' + 1/2}\xi/t_{1}; q)_\infty} (q^{K}\xi)^{\lambda ' + \alpha_{1}'} \sum_{n = 0}^{N} c_{n} (E_{0}')  (q^{K}\xi)^{n} .
  \end{align*}
It follows from equation (\ref{def:theta}) that
 \begin{align*}
    \frac{(q^{K - l_{1}' + 1/2}\xi/t_{1}; q)_\infty}{(q^{K - h_{1}' + 1/2}\xi/t_{1}; q)_\infty} &= \frac{\vartheta_{q} (q^{K - l_{1}' + 1/2}\xi/t_{1})}{(q^{- K + l_{1}' + 1/2}t_{1}/\xi; q)_{\infty} (q; q)_{\infty}} \frac{(q^{- K + h_{1}' + 1/2}t_{1}/\xi; q)_{\infty} (q; q)_{\infty}}{\vartheta_{q} (q^{K - h_{1}' + 1/2}\xi/t_{1})} \\
    &=\frac{\vartheta_{q} (q^{K - l_{1}' + 1/2}\xi/t_{1})}{\vartheta_{q} (q^{K - h_{1}' + 1/2}\xi/t_{1})} \frac{(q^{- K + h_{1}' + 1/2}t_{1}/\xi; q)_\infty}{(q^{- K + l_{1}' + 1/2}t_{1}/\xi; q)_\infty} .
  \end{align*}
By applying Lemma \ref{lem:theta}, we have
  \begin{align*}
 & h(s)s^{\alpha_{1}'} = (q^{K}\xi)^{h_{1}' - l_{1}'} \left(\frac{q^{- h'_{1}}}{q^{- l'_{1}}}\right)^{K} \frac{\vartheta_{q} (q^{- l_{1}' + 1/2}\xi/t_{1})}{\vartheta_{q} (q^{- h_{1}' + 1/2}\xi/t_{1})} \frac{(q^{- K + h_{1}' + 1/2}t_{1}/\xi; q)_\infty}{(q^{- K + l_{1}' + 1/2}t_{1}/\xi; q)_\infty} \\
    &\qquad \qquad \times (q^{K}\xi)^{\lambda ' + \alpha_{1}'} \sum_{n = 0}^{N} c_{n} (E_{0}') (q^{K}\xi)^{n} \\
    & \quad = \xi^{h_{1}' - l_{1}'} \frac{\vartheta_{q} (q^{- l_{1}' + 1/2}\xi/t_{1})}{\vartheta_{q} (q^{- h_{1}' + 1/2}\xi/t_{1})} \frac{(q^{- K + h_{1}' + 1/2}t_{1}/\xi; q)_\infty}{(q^{- K + l_{1}' + 1/2}t_{1}/\xi; q)_\infty} 
    (q^{K}\xi)^{\lambda ' + \alpha_{1}' +N} \sum_{n = 0}^{N} c_{n} (E_{0}') (q^{K}\xi)^{n-N} .
  \end{align*}
Since $(q^{- K + h_{1}' + 1/2}t_{1}/\xi; q)_\infty / (q^{- K + l_{1}' + 1/2}t_{1}/\xi; q)_\infty \to 1 \quad (K \to - \infty) $ and $\lambda ' + \alpha_{1}' +N = \alpha_{1}' - \alpha_{2}'  $, a sufficient condition for $C_2 =0$ in equation (\ref{eq:A4hslim}) is the condition $\alpha_{1}' < \alpha_{2}' $.
Therefore, if $\beta' < 0, \alpha_{1}' < \alpha_{2}'$, then we can apply Theorem \ref{thm:q-trans} for the function $h(s) = h_1(s)$ in equation (\ref{eq:1h1h2}).
It is shown similarly 
For the function $h(s) = h_2(s)$ in equation (\ref{eq:1h1h2}) we can show similarly that $C_1=C_2=0$ and Theorem \ref{thm:q-trans} is applicable, if $\beta' < 0, \alpha_{1}' < \alpha_{2}'$ (see \cite{Mm}).
Note that we used Lemma \ref{lem:theta} to show $C_1 =0$ under the condition $\beta' < 0 $.

We apply Theorem \ref{thm:q-trans} with the condition $\mu _0=0$.
It follows from
  \begin{align*}
    &N = -\lambda ' - \alpha_{2}' = -(l_{1}' + h_{2}' - h_{1}' - l_{2}' - \alpha_{1}' + \alpha_{2}' - \beta' + 2)/2 \\
    &\Leftrightarrow \beta' = 2N + l_{1}' + h_{2}' - h_{1}' - l_{2}' - \alpha_{1}' + \alpha_{2}' + 2
  \end{align*}
that the parameters in Theorem \ref{thm:q-trans} are determined as
  \begin{align}
    & \chi = -h_{2}' + l_{2}' - 1 - N, \quad \mu = 1 + \chi = -h_{2}' + l_{2}' - N, \label{eq:1parampf} \\
    &  \alpha_{2} = \alpha_{1} - \alpha_{1}' + \alpha_{2}' - \chi = \alpha_{1} - \alpha_{1}' + \alpha_{2}' + h_{2}' - l_{2}' + 1 + N , \notag \\
    & \beta = -\beta' - \chi = h_{1}' - l_{1}' + \alpha_{1}' - \alpha_{2}' - 1 - N, \quad l_{1} = l_{1}',  \quad  l_{2} = l_{2}', \notag \\
    & h_{1} = h_{1}' + \chi = h_{1}' -h_{2}' + l_{2}' - 1 - N, \quad h_{2} = h_{2}' + \chi = l_{2}' - 1 - N, \quad E = q^{\alpha_{1} - \alpha_{1}'}E_{0}' , \notag
  \end{align}
which include equation (\ref{eq:1param}).
In particular, we have $h_{2} = l_{2} - 1 - N$.

We rewrite the Jackson integral $g(x)$ in equation (\ref{eq:A4Jint}) by setting $h(s)=h_1(s)$.
We have
  \begin{align*}
  g(x) & = x^{-\alpha_{1}} \int_{0}^{\xi \infty} s^{-(h_{1}' + h_{2}' - l_{1}' - l_{2}' - \alpha_{1}' - \alpha_{2}' + \beta' + 2)/2} h(s) P_{\mu, 0}^{(1)} (x, s) {\rm{d}}_{q}s \\
    &= x^{-\alpha_{1}} \int_{0}^{\xi \infty} s^{-(h_{1}' + h_{2}' - l_{1}' - l_{2}' - \alpha_{1}' - \alpha_{2}' + \beta' + 2)/2} \\
    &\qquad \times s^{\lambda ' + h_{1}' - l_{1}'} \frac{(s/(q^{l_{1}' - 1/2}t_{1}); q)_\infty}{(s/(q^{h_{1}' - 1/2}t_{1}); q)_\infty} \sum_{n = 0}^{N} c_{n} (E_{0}') s^{n} \frac{(q^{\mu} s/x; q)_{\infty}}{( s/x; q)_{\infty}} {\rm{d}}_{q}s \\
    & = x^{-\alpha_{1}} \int_{0}^{\xi \infty} s^{-\beta'} \frac{(s/(q^{l_{1}' - 1/2}t_{1}); q)_\infty}{(s/(q^{h_{1}' - 1/2}t_{1}); q)_\infty} \frac{(q^{-h_{2}' + l_{2}' - N} s/x; q)_{\infty}}{(s/x; q)_{\infty}} \sum_{k = 0}^{N} c_{k} (E_{0}') s^{k} {\rm{d}}_{q}s .
  \end{align*}
Here, we used $\beta' = 2N + l_{1}' + h_{2}' - h_{1}' - l_{2}' - \alpha_{1}' + \alpha_{2}' + 2$, $\lambda ' = - N - \alpha_{2}'$ and $\mu = -h_{2}' + l_{2}' - N$.
It follows from the definition of the Jackson integral and equation (\ref{eq:GR1.2.30}) that
  \begin{align*}
    g(x) &= (1 - q) x^{-\alpha_{1}} \sum_{n = -\infty}^{\infty} q^{n}\xi (q^{n}\xi)^{-\beta'}\frac{(q^{n}\xi/(q^{l_{1}' - 1/2}t_{1}); q)_{\infty}}{(q^{n}\xi/(q^{h_{1}' - 1/2}t_{1}); q)_\infty} \\
    &\qquad \qquad \qquad \qquad \times \frac{(q^{-h_{2}' + l_{2}' - N} q^{n}\xi/x; q)_{\infty}}{(q^{n}\xi/x; q)_{\infty}} \sum_{k = 0}^{N} c_{k} (E_{0}') (q^{n}\xi)^{k} \\
 &= (1 - q)\xi^{-\beta' + 1}x^{-\alpha_{1}}\frac{(q^{-l_{1}' + 1/2}\xi/t_{1}, q^{-h_{2}' + l_{2}' - N}\xi/x; q)_{\infty}}{(q^{-h_{1}' + 1/2}\xi/t_{1}, \xi/x; q)_{\infty}} \notag \\
    &\quad \times \sum_{n = -\infty}^{\infty} \frac{(q^{-h_{1}' + 1/2}\xi/t_{1}, \xi/x; q)_{n}}{(q^{-l_{1}' + 1/2}\xi/t_{1}, q^{-h_{2}' + l_{2}' - N}\xi/x; q)_{n}} q^{(-\beta' + 1)n} \sum_{k = 0}^{N} c_{k} (E_{0}') q^{nk} \xi^{k} . \notag 
  \end{align*}
Therefore, we obtain the function $g_1 (x)$ in equation (\ref{eq:1g1g2}), and it follows from Theorem \ref{thm:q-trans} (i) that the function $g_1 (x)$ satisfies equation (\ref{eq:thmA4gE1}).
If $\beta' < 0, \alpha_{1}' < \alpha_{2}'$, then we showed that $C_1=C_2=0$ and the function $g_1 (x)$ satisfies the $q$-Heun equation.

We apply Theorem \ref{thm:q-trans} (ii) by setting $h(s)=h_2(s)$ and we rewrite the Jackson integral $g(x)$ in equation (\ref{eq:A4Jint2}). 
We have
  \begin{align*}
  &  g(x) = x^{-\alpha_{1}} \int_{0}^{\xi \infty} s^{-(h_{1}' + h_{2}' - l_{1}' - l_{2}' - \alpha_{1}' - \alpha_{2}' + 2N + l_{1}' + h_{2}' - h_{1}' - l_{2}' - \alpha_{1}' + \alpha_{2}'  + 2 + 2)/2} s^{\lambda_{1}'}  \\
    &\qquad \times \frac{(q^{h_{1}' + 1/2}t_{1}/s; q)_\infty}{(q^{l_{1}' + 1/2}t_{1}/s; q)_\infty} \sum_{n = 0}^{N} c_{n} (E_{0}') s^{n} \left(\frac{x}{s} \right)^{-h_{2}' + l_{2}' - N} \frac{(qx/s; q)_{\infty}}{(q^{-(-h_{2}' + l_{2}' - N) + 1} x/s; q)_{\infty}} {\rm{d}}_{q}s \\
    & \quad = x^{-\alpha_{1} -h_{2}' + l_{2}' - N} \int_{0}^{\xi \infty} s^{\alpha_{1}' - \alpha_{2}' - 2 - N} \frac{(q^{h_{1}' + 1/2}t_{1}/s, qx/s; q)_\infty}{(q^{l_{1}' + 1/2}t_{1}/s, q^{h_{2}' - l_{2}' + N + 1} x/s; q)_\infty} \sum_{k = 0}^{N} c_{k} (E_{0}')  s^{k} {\rm{d}}_{q}s . 
  \end{align*}
It follows from the definition of the Jackson integral and equation (\ref{eq:GR1.2.30}) that
  \begin{align*}
  &  g(x) = x^{-\alpha_{1} -h_{2}' + l_{2}' - N} (1 - q) \\
    & \quad \times \sum_{n = - \infty}^{\infty} (q^{n}\xi)^{\alpha_{1}' - \alpha_{2}' - 1 - N} \frac{(q^{h_{1}' + 1/2 - n}t_{1}/\xi, q^{1 - n}x/\xi; q)_\infty}{(q^{l_{1}' + 1/2 - n}t_{1}/\xi, q^{h_{2}' - l_{2}' + N + 1 -n}x/\xi; q)_\infty} \sum_{k = 0}^{N} c_{k} (E_{0}') (q^{n}\xi)^{k} \\
    & \; = (1 - q) x^{-\alpha_{1} -h_{2}' + l_{2}' - N} \frac{(q^{h_{1}' + 1/2}t_{1}/\xi, qx/\xi; q)_\infty}{(q^{l_{1}' + 1/2}t_{1}/\xi, q^{h_{2}' - l_{2}' + N + 1}x/\xi; q)_\infty} \\
    & \quad \times \sum_{n = - \infty}^{\infty} (q^{n}\xi)^{\alpha_{1}' - \alpha_{2}' - 1 - N} \frac{(q^{l_{1}' + 1/2}t_{1}/\xi, q^{h_{2}' - l_{2}' + N + 1}x/\xi; q)_{-n}}{(q^{h_{1}' + 1/2}t_{1}/\xi, qx/\xi; q)_{-n}}  \sum_{k = 0}^{N} c_{k} (E_{0}') q^{nk} \xi^{k} .
  \end{align*}
By replacing $-n$ to $n$, we obtain the function $g_2 (x)$ in equation (\ref{eq:1g1g2}), and it follows from Theorem \ref{thm:q-trans} (ii) that the function $g_2 (x)$ satisfies equation (\ref{eq:thmA4gE2}).
If $\beta' < 0, \alpha_{1}' < \alpha_{2}'$, then we have $C_1=C_2=0$ and the function $g_2 (x)$ satisfies the $q$-Heun equation.
\end{proof}

The summations in equation (\ref{eq:1g1g2}) are bilateral, and the functions $g_1(x)$ and $g_2 (x) $ are expressed as finite summations of the bilateral basic hypergeometric series $_2\psi _2$.
By specializing the value $\xi$, we obtain unilateral series.
We set $\xi = q^{l_{1}' + 1/2}t_{1}$ in $g_1 (x)$.
It follows from equation (\ref{eq:qshift0}) that 
   \begin{align*}
  &  g_1 (x) | _{\xi = q^{l_{1}' + 1/2}t_{1} } = (1 - q) x^{-\alpha_{1}} \frac{(q^{-l_{1}' + 1/2}\xi/t_{1}, q^{-h_{2}' + l_{2}' - N}\xi/x; q)_{\infty}}{(q^{-h_{1}' + 1/2}\xi/t_{1}, \xi/x; q)_{\infty}} \\
& \quad \times  \sum_{k = 0}^{N} \xi^{-\beta' + 1} \xi^{k} c_{k} (E'_0 ) \sum_{n = -\infty}^{\infty} \frac{(q^{-h_{1}' + 1/2}\xi/t_{1}, \xi/x; q)_{n}}{(q^{-l_{1}' + 1/2}\xi/t_{1}, q^{-h_{2}' + l_{2}' - N}\xi/x; q)_{n}} q^{(-\beta' + 1)n} q^{n k} \Big| _{\xi = q^{l_{1}' + 1/2}t_{1} } \\
& \; = (1 - q)x^{-\alpha_{1}}\frac{(q, q^{-h_{2}' + l_{2}' - N + l_{1}' + 1/2}t_{1}/x; q)_{\infty}}{(q^{-h_{1}' + l_{1}' + 1}, q^{l_{1}' + 1/2}t_{1}/x; q)_{\infty}} \\
    &\qquad \times  \sum_{k = 0}^{N} (q^{l_{1}' + 1/2}t_{1})^{-\beta' + 1 + k} c_{k} (E'_0 )
    {}_2\phi_1 \biggl( \begin{array}{@{}c@{}} q^{l_{1}' + 1/2}t_{1}/x, q^{-h_{1}' + l_{1}' + 1} \\
q^{-h_{2}' + l_{2}' + l_{1}' + 1/2 - N}t_{1}/x \end{array} ;q,q^{-\beta' + 1 + k} \biggr) . \notag
  \end{align*}
Heine's transformation formula for $_{2}\phi_{1}$ series is written as
  \begin{align}
    {}_2\phi_1 \biggl( \begin{array}{@{}c@{}} a, b \\
c \end{array} ;q,z \biggr)
    = \frac{(b, az; q)_{\infty}}{(c, z; q)_{\infty}}
    {}_2\phi_1 \biggl( \begin{array}{@{}c@{}} c/b, z \\
az \end{array} ;q,b \biggr), \quad (|z| < 1, \, |b| <1) \label{eq:Heine}
  \end{align}
(see \cite{GR}), and it follows that
   \begin{align}
   &  g_1 (x) | _{\xi = q^{l_{1}' + 1/2}t_{1} } = (1 - q)x^{-\alpha_{1}}\frac{(q; q)_{\infty}}{(q^{-h_{1}' + l_{1}' + 1}, q)_{\infty}} \sum_{k = 0}^{N} (q^{l_{1}' + 1/2}t_{1})^{-\beta' + 1 + k} c_{k} (E'_0 ) \label{eq:1g11} \\
    &\qquad \times \frac{(q^{-h_{1}' + l_{1}' - \beta' + 2 + k}; q)_{\infty}}{(q^{-\beta' + 1 + k}; q)_{\infty}}
    {}_2\phi_1 \biggl( \begin{array}{@{}c@{}} q^{-h_{2}' + l_{2}' - N}, q^{-\beta' + 1 + k} \\
q^{-h_{1}' + l_{1}' - \beta' + 2 + k} \end{array} ;q,q^{l_{1}' + 1/2}t_{1}/x \biggr) \notag
  \end{align}
under the condition $ |x| > |q^{l_{1}' + 1/2}t_{1}|$ and $\beta '<0$.
By setting $\xi = q^{h_{2}' - l_{2}' + N + 1}x$ in $g_1 (x)$, we also have
  \begin{align}
 & g_1 (x) | _{\xi = q^{h_{2}' - l_{2}' + N + 1}x } = (1 - q)x^{-\alpha_{1}}\frac{(q^{-l_{1}' + h_{2}' - l_{2}' + N + 3/2}x/t_{1}, q; q)_{\infty}}{(q^{-h_{1}' + h_{2}' - l_{2}' + N + 3/2}x/t_{1}, q^{h_{2}' - l_{2}' + N + 1}; q)_{\infty}} \label{eq:1g12} \\
 &  \times \sum_{k = 0}^{N} (q^{h_{2}' - l_{2}' + N + 1}x)^{-\beta' + 1 + k} c_{k} (E'_0 ) {}_2\phi_1 \biggl( \begin{array}{@{}c@{}} q^{-h_{1}' + h_{2}' - l_{2}' + 3/2 + N}x/t_{1}, q^{h_{2}' - l_{2}' + 1  + N}  \\
q^{h_{2}' - l_{1}' - l_{2}' + 3/2 + N}x/t_{1} \end{array} ;q,q^{-\beta' + 1 + k} \biggr) \notag \\
    &= (1 - q)x^{-\alpha_{1}}\frac{(q; q)_{\infty}}{(q^{h_{2}' - l_{2}' + N + 1}; q)_{\infty}} \sum_{k = 0}^{N} (q^{h_{2}' - l_{2}' + N + 1}x)^{-\beta' + 1 + k} c_{k} (E'_0 )\notag \\
    & \times \frac{(q^{h_{2}' - l_{2}' + 2 + N - \beta' + k}; q)_{\infty}}{(q^{-\beta' + 1 + k}; q)_{\infty}} \, {}_2\phi_1 \biggl( \begin{array}{@{}c@{}} q^{h_{1}' - l_{1}'}, q^{-\beta' + 1 + k} \\
q^{h_{2}' - l_{2}' + 2 + N -\beta' + k} \end{array} ;q,q^{-h_{1}' + h_{2}' - l_{2}' + 3/2 + N}x/t_{1} \biggr) \notag
  \end{align}
under the condition $|x| < | q^{ h_{1}' - h_{2}' + l_{2}' - N - 3/2}t_{1}|  $ and $\beta '<0$.
We set $\xi = q^{h_{1}' - 1/2}t_{1}$ in $g_2 (x)$.
If $|x| < | q^{ h_{1}' - h_{2}' + l_{2}' - N - 3/2}t_{1}|  $ and $\alpha_{1}' < \alpha_{2}' $, then
 \begin{align}
 & g_{2}(x) | _{\xi = q^{h_{1}' - 1/2}t_{1} } = (1 - q) x^{-\alpha_{1} -h_{2}' + l_{2}' - N} \frac{(q, q^{-h_{1}' + 3/2}x/t_{1}; q)_{\infty}}{(q^{-h_{1}' + l_{1}' + 1}, q^{- h_{1}' + h_{2}' - l_{2}' + N + 3/2}x/t_{1}; q)_\infty}  \\
    & \times \sum_{k = 0}^{N} (q^{h_{1}' - 1/2}t_{1})^{\alpha_{1}' - \alpha_{2}' - 1 - N + k} c_{k} (E'_0 ) {}_2\phi_1 \biggl( \begin{array}{@{}c@{}} q^{-h_{1}' + l_{1}' + 1}, q^{- h_{1}' + h_{2}' - l_{2}' + N + 3/2}x/t_{1} \\
q^{-h_{1}' + 3/2}x/t_{1} \end{array} ;q,q^{-\alpha_{1}' + \alpha_{2}' + 1 + N - k} \biggr) \notag \\
    &= (1 - q) x^{-\alpha_{1} -h_{2}' + l_{2}' - N} \frac{(q; q)_{\infty}}{(q^{-h_{1}' + l_{1}' + 1}; q)_\infty}  \sum_{k = 0}^{N} (q^{h_{1}' - 1/2}t_{1})^{\alpha_{1}' - \alpha_{2}' - 1 - N + k} c_{k} (E'_0 )  \notag \\
    & \quad \times \frac{(q^{-h_{1}' + l_{1}' -\alpha_{1}' + \alpha_{2}' + 2 + N - k}; q)_{\infty}}{(q^{-\alpha_{1}' + \alpha_{2}' + 1 + N - k}; q)_{\infty}} {}_2\phi_1 \biggl( \begin{array}{@{}c@{}} q^{- h_{2}' + l_{2}' - N}, q^{-\alpha_{1}' + \alpha_{2}' + 1 + N - k} \\
q^{-h_{1}' + l_{1}' -\alpha_{1}' + \alpha_{2}' + 2 + N - k} \end{array} ;q,q^{- h_{1}' + h_{2}' - l_{2}' + N + 3/2}x/t_{1} \biggr) . \notag
  \end{align}
We set $\xi = x$ in $g_2 (x)$.
If $|x| > | q^{l_{1}' + 1/2}t_{1}| $ and $\alpha_{1}' < \alpha_{2}' $, then
\begin{align}
  &  g_{2}(x) | _{\xi = x } = (1 - q)x^{- \alpha_{1} +\alpha_{1}' - \alpha_{2}' - 1 - h_{2}' + l_{2}' - 2N} \frac{(q^{h_{1}' + 1/2}t_{1}/x, q; q)_{\infty}}{(q^{l_{1}' + 1/2}t_{1}/x, q^{h_{2}' - l_{2}' + N + 1}; q)_{\infty}} \\
    & \qquad \qquad \times \sum_{k = 0}^{N} x^{k} c_{k} (E'_0 ) 
    {}_2\phi_1 \biggl( \begin{array}{@{}c@{}} q^{l_{1}' + 1/2}t_{1}/x, q^{h_{2}' - l_{2}' + N + 1} \\
q^{h_{1}' + 1/2}t_{1}/x \end{array} ;q,q^{-\alpha_{1}' + \alpha_{2}' + 1 + N - k} \biggr) \notag \\
   &= (1 - q)x^{ - \alpha_{1} + \alpha_{1}' - \alpha_{2}' - 1 - h_{2}' + l_{2}' - 2N} \frac{(q; q)_{\infty}}{(q^{h_{2}' - l_{2}' + N + 1}; q)_{\infty}} \notag \\
    &\quad \times \sum_{k = 0}^{N} x^{k} c_{k} (E'_0 ) \frac{(q^{h_{2}' - l_{2}' -\alpha_{1}' + \alpha_{2}' + 2 + 2N - k}; q)_{\infty}}{(q^{-\alpha_{1}' + \alpha_{2}' + 1 + N - k}; q)_{\infty}} {}_2\phi_1 \biggl( \begin{array}{@{}c@{}} q^{h_{1}' - l_{1}'}, q^{-\alpha_{1}' + \alpha_{2}' + 1 + N - k} \\
q^{h_{2}' - l_{2}' -\alpha_{1}' + \alpha_{2}' + 2 + 2N - k} \end{array} ;q,q^{l_{1}' + 1/2}t_{1}/x \biggr) . \notag
  \end{align}
These four functions are solutions to the $q$-Heun equation in the setting of Theorem \ref{thm:polynom1gauge}.

In Theorem \ref{thm:polynom1gauge}, the parameters $h_{1}' ,h_{2}', \dots ,\beta' $ are given with the condition $( h_{1}' - h_{2}' -l_{1}' + l_{2}' + \alpha_{1}' - \alpha_{2}' + \beta' - 2)/2 =N$ $(N \in {\mathbb Z}_{\geq 0})$, and the parameters $h_{1} ,h_{2}, \dots ,\beta $ are determined by equation (\ref{eq:1param}).
Then, the relation $h_{2} = l_{2} - 1 - N $ holds.
We rewrite the theorem by starting from the parameters $h_{1} ,h_{2}, \dots ,\beta $ with the condition $h_{2} = l_{2} - 1 - N $ and by adding the functions obtained by specializing the value $\xi $.
\begin{thm} \label{thm:1sols}
Assume that $h_{2} = l_{2} - 1 - N$ and $N$ is a non-negative integer.
Set  $\lambda_{1} = (h_{1} + h_{2} - l_{1} - l_{2} - \alpha_{1} - \alpha_{2} - \beta + 2)/2$, $\lambda_{2} = \lambda_{1} + \beta $,  $c_{-1}(E)=0 $,  $c_0(E)=1 $,
\begin{align*}
 & x_n = t_{1}t_{2}q^{ -n  + l_1 + l_{2} + \alpha_{2} } (1 - q^{n})(1 - q^{n -1 + \alpha _1 + \lambda _1 + \beta }) , \\
 & y_n = ( q^{1 - n  - \beta }  + q^{n - 1 -N } )q^{-\lambda _1 +h_{1} +1/2 }t_{1} + ( q^{1- n + \alpha_{2} } + q^{n -1 -N+ \alpha _1 } ) q^{l_{2} -1/2} t_{2} , \notag \\ 
 & z_n =  q^{ n -N - 2  + \alpha_{1}   } (1 - q^{N - n +1 + \alpha_{2} + \lambda _1 })(1 - q^{N -n + 2 }) \notag
\end{align*}
and we determine the polynomials $c_n(E)$ $(n=1,\dots ,N)$ recursively by
\begin{align}
 & c_{n} (E) x_n = c_{n - 1} (E) [E + y_n ] -c_{n-2} (E) z_n .  \label{eq:1cnE}
\end{align}
We define the polynomial $c (E)$ by
\begin{align}
 & c (E) = x_1 x_2 \cdots x_{N} [ c_{N} (E) \{ E + y_{N+1} \} -c_{N-1} (E) z_{N+1} ] . \label{eq:1cE}
\end{align}
Assume that $E=E_0$ is a solution of the algebraic equation $c (E)=0 $.
\\
(i) Let $\xi $ be the variable which is independent of the variable $x$ or proportional to $x$.
If $\lambda _{2} + \alpha _{1} >1 $ and  $ \lambda _{1} + \alpha _{2}  > 1$,  then the Jackson integrals
    \begin{align}
 & g _1 (x) = (1 - q) x^{-\alpha_{1}} \frac{(q^{-l_{1} + 1/2}\xi/t_{1}, q^{\lambda _{1} + \alpha _{1} }\xi/x; q)_{\infty}}{(q^{ \lambda _{1} -h_1 + \alpha _{1} - 1/2 }\xi/t_{1}, \xi/x; q)_{\infty}} \label{eq:1thmg1g2}\\
 & \qquad \times \sum_{n = -\infty}^{\infty} \frac{(q^{\lambda _{1} -h_1 + \alpha _{1} - 1/2}\xi/t_{1}, \xi/x; q)_{n}}{(q^{-l_{1} + 1/2}\xi/t_{1}, q^{\lambda _{1} + \alpha _{1} }\xi/x; q)_{n}}  \sum_{k = 0}^{N} q^{( \lambda _{1} + \alpha _{1} +\beta +k )n} \xi^{ \lambda _{1} + \alpha _{1} +\beta + k} c_{k} (E_0 ) , \notag \\
 & g _2 (x) = (1 - q) x^{ \lambda _{1} } \frac{(q^{ - \lambda _{1} + h_1 - \alpha _{1} + 3/2}t_{1}/\xi, qx/\xi; q)_{\infty}}{(q^{l_{1} + 1/2}t_{1}/\xi, q^{ - \lambda _{1} - \alpha _{1}  + 1}x/\xi; q)_{\infty}} \notag \\
 & \quad \times \sum_{n = -\infty}^{\infty} \frac{(q^{l_{1} + 1/2}t_{1}/\xi, q^{ - \lambda _{1} - \alpha _{1}  + 1}x/\xi; q)_{n}}{(q^{ - \lambda _{1} + h_1 - \alpha _{1} + 3/2}t_{1}/\xi, qx/\xi; q)_{n}} \sum_{k = 0}^{N} q^{( \lambda _{1} + \alpha _{2} + N -k)n} \xi^{ -\lambda _{1} - \alpha _{2} - N +k}  c_{k} (E_0 ) \notag
    \end{align}
converge and the functions $g _i (x)$ $(i=1,2)$ satisfy the $q$-Heun equation
\begin{align}
& A^{\langle 4 \rangle} (x; h_{1}, h_{2}, l_{1}, l_{2}, \alpha_{1}, \alpha_{2}, \beta) g _i (x) = E_0 g _i (x) .
\label{eq:1thmqH}
\end{align}
(ii) The following functions $g _i (x) $ $(i=3,4,5,6)$ are solutions to the $q$-Heun equation (\ref{eq:1thmqH}).
\begin{align*}
& g _3 (x) = x^{ \lambda _{1} } \sum_{k = 0}^{N} (q^{- \lambda _{1} + h_1 - \alpha _{1} + 1/2}t_{1})^{ k} c_{k} (E_0 )  \\
    & \quad \times \frac{(q^{ - \beta + 1 - k}; q)_{\infty}}{(q^{ \lambda _{1} + \alpha _{2} + N - k}; q)_{\infty}} {}_2\phi_1 \biggl( \begin{array}{@{}c@{}} q^{\lambda _{1} + \alpha _{1} }, q^{ \lambda _{1} + \alpha _{2} + N - k} \\
q^{ - \beta + 1 - k} \end{array} ;q,q^{ -h_1 + 1/2}x/t_{1} \biggr) , \\
 &  g _4 (x)  = x^{ \lambda _{2}} \sum_{k = 0}^{N} (q^{ - \lambda _{1} - \alpha _{1}  + 1}x)^{ k} c_{k} (E_0 ) \\
    &\quad \times \frac{(q^{\beta + k  + 1 }; q)_{\infty}}{(q^{ \lambda _{2} + \alpha _{1} + k}; q)_{\infty}} \, {}_2\phi_1 \biggl( \begin{array}{@{}c@{}} q^{\lambda _{2} + \alpha_{2} + N }, q^{ \lambda _{2} + \alpha _{1} + k} \\
q^{\beta  + k  + 1 } \end{array} ;q,q^{ -h_1  + 1/2 }x/t_{1} \biggr) , \\
&  g _5 (x)
   =x^{-\alpha_{1}}  \sum_{k = 0}^{N} (q^{l_{1} + 1/2}t_{1})^{ k} c_{k} (E_0 ) \\
    &\quad \times \frac{(q^{ \alpha_{1} - \alpha_{2} -N + k +1 }; q)_{\infty}}{(q^{ \lambda _{2} + \alpha _{1} + k}; q)_{\infty}}
    {}_2\phi_1 \biggl( \begin{array}{@{}c@{}} q^{\lambda _{1} + \alpha _{1} }, q^{ \lambda _{2} + \alpha _{1} + k} \\
q^{ \alpha_{1} - \alpha_{2} -N + k +1} \end{array} ;q,q^{l_{1} + 1/2}t_{1}/x \biggr) , \\
   & g _6 (x) = x^{ - \alpha _{2} }  \sum_{k = 0}^{N} x^{k-N} c_{k} (E_0 ) \\
    &\quad \times \frac{(q^{ - \alpha _{1} + \alpha _{2} + 1 + N - k}; q)_{\infty}}{(q^{ \lambda _{1} + \alpha _{2} + N - k}; q)_{\infty}} {}_2\phi_1 \biggl( \begin{array}{@{}c@{}} q^{\lambda _{2} + \alpha_{2} + N }, q^{ \lambda _{1} + \alpha _{2} + N - k} \\
q^{- \alpha _{1} + \alpha _{2} + 1 + N - k} \end{array} ;q,q^{l_{1} + 1/2}t_{1}/x \biggr).
  \end{align*}
\end{thm}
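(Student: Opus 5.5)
Theorem \ref{thm:1sols} is Theorem \ref{thm:polynom1gauge} rewritten in the unprimed parameters, together with the four specializations of $\xi$ computed after that theorem. So I would first invert the parameter map (\ref{eq:1param}) and then translate every object. Given $h_1,h_2,l_1,l_2,\alpha_1,\alpha_2,\beta$ with $h_2=l_2-1-N$, choose $\alpha'_1=\alpha_1$ (so $E'_0=E_0$), $l'_i=l_i$, $h'_1=h_1+h_2'-l_2'+1+N$, and leave $h'_2$ free. Only the combinations $h'_1-h'_2$, $\alpha'_2$, $\beta'$ enter, and they are forced by $\alpha'_2=\alpha_2-h'_2+l'_2-1-N$ and $\beta'=h'_1-l'_1+\alpha'_1-\alpha'_2-1-N$. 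Using the definition of $\lambda_1$ I would then record the dictionary
\begin{align*}
\lambda_1+\alpha_1=-h'_2+l'_2-N, \quad -\beta'+1=\lambda_2+\alpha_1, \quad \alpha'_2-\alpha'_1=\lambda_1+\alpha_2-1, \quad -\alpha_1-h'_2+l'_2-N=\lambda_1 .
\end{align*}
I would also check that the constraint $(h'_1-h'_2-l'_1+l'_2+\alpha'_1-\alpha'_2+\beta'-2)/2=N$ holds automatically. With this dictionary, the hypotheses $\beta'<0$ and $\alpha'_1<\alpha'_2$ become $\lambda_2+\alpha_1>1$ and $\lambda_1+\alpha_2>1$.

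The second step is to check that the recursion data match. The quantities $x''_n,y''_n,z''_n$ of Proposition \ref{prop:polynom}, with $h''_1=l'_1$, $l''_1=h'_1$, $h''_2=h'_2$, $l''_2=l'_2$, should reduce to $x_n,y_n,z_n$ of the theorem.
\begin{itemize}
\item For $x''_n$, the factor $1-q^{n-\beta'}$ becomes $1-q^{n-1+\alpha_1+\lambda_1+\beta}$, and the prefactor becomes $q^{-n+l_1+l_2+\alpha_2}$ after substituting $\lambda''_1=-N-\alpha'_2$.
\item For $y''_n$, the $t_1$ terms combine into $(q^{1-n-\beta}+q^{n-1-N})q^{-\lambda_1+h_1+1/2}t_1$, and the $t_2$ terms similarly.
\item The $z''_n$ do \emph{not} match termwise. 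The stated $z_n$ look like $z''_{N+2-n}$-type expressions, which suggests the theorem's $c_k$ are a reindexed or renormalized version ($k\mapsto N-k$ together with rescaling) of the $c''_k$.
\end{itemize}
Since the polynomial $c(E)$ in (\ref{eq:cE''sum}) is invariant under the reversal $j\mapsto N+2-j$ (with the appropriate pairing of $x$ and $z$), I expect $c(E)=c''(E)$ up to relabeling. If the reversal is needed, I would justify it by viewing $\sum c_k s^k$ as a polynomial solution read from infinity instead of from $0$, which is a symmetry of the $q$-Heun operator. This matching of conventions is where I expect the main obstacle. I would first test it directly for $N=0$ and $N=1$, and then prove it in general via the expansion (\ref{eq:cE''sum}).

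Part (i) then follows by substituting the dictionary into (\ref{eq:1g1g2}). For example, $q^{-h'_1+1/2}\xi/t_1=q^{\lambda_1-h_1+\alpha_1-1/2}\xi/t_1$, and the exponent $-\beta'+1+k$ becomes $\lambda_1+\alpha_1+\beta+k$. Convergence and the equation itself come from Theorem \ref{thm:polynom1gauge}(ii).

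For (ii), I would use the four specializations (\ref{eq:1g11}), (\ref{eq:1g12}) and the two for $g_2$. In each, I would drop the overall $x$-independent constants and divide by constants such as $\xi^{-\beta'+1}$, which is harmless because the equation is linear. The relevant constants are $(1-q)(q;q)_\infty/(\cdots)_\infty$ and the factor $(q^{l'_1+1/2}t_1)^{-\beta'+1}$. Then:
\begin{itemize}
\item $g_1|_{\xi=q^{l'_1+1/2}t_1}$ gives $g_5$;
\item $g_1|_{\xi=q^{h'_2-l'_2+N+1}x}$ gives $g_4$, after factoring $x^{-\alpha_1}x^{-\beta'+1}=x^{\lambda_2}$;
\item $g_2|_{\xi=q^{h'_1-1/2}t_1}$ gives $g_3$;
\item $g_2|_{\xi=x}$ gives $g_6$.
\end{itemize}
In each case the ${}_2\phi_1$ parameters are matched through the same dictionary. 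The convergence conditions on $|x|$ and the inequalities used there can then be removed: each result is a finite sum of ${}_2\phi_1$ series in $x$ or $1/x$, so it extends analytically (or meromorphically, through the $q$-Pochhammer ratios), and it still satisfies the equation as a formal series identity. This gives solutions for general parameters.
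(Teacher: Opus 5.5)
Your overall route is the paper's. You set $\chi=\lambda_1+\alpha_1-1$, take $\alpha'_1=\alpha_1$, $l'_i=l_i$, $h'_i=h_i-\chi$, $\alpha'_2=\alpha_2+\chi$, $\beta'=-\beta-\chi$, $\mu_0=0$, apply Theorem~\ref{thm:polynom1gauge}, and read off $g_3,\dots,g_6$ from the four specializations of $\xi$. Your assignment of specializations to $g_3,\dots,g_6$ is exactly the paper's. However, the step you call the main obstacle rests on a miscalculation, and the remedy you propose would break the proof.

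\textbf{The $z_n$ do match termwise.} With the above choice, $\lambda''_1=-N-\alpha''_2=1-N-\alpha_1-\alpha_2-\lambda_1$. Hence $\lambda''_1+\alpha''_1=1-N-\alpha_2-\lambda_1$ and $\lambda''_1+\alpha''_2=-N$, so
\[
z''_n=q^{1-n+N+\alpha_1+\alpha_2+\lambda_1}\bigl(1-q^{n-1-N-\alpha_2-\lambda_1}\bigr)\bigl(1-q^{n-2-N}\bigr).
\]
Applying $1-q^{-a}=-q^{-a}(1-q^{a})$ to both factors gives exactly $q^{n-N-2+\alpha_1}(1-q^{N-n+1+\alpha_2+\lambda_1})(1-q^{N-n+2})=z_n$. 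The factor $1-q^{N-n+2}$ only looks ``reversed''. It is the rewritten form of $1-q^{n-2+\lambda''_1+\alpha''_2}$, which vanishes at $n=N+2$ precisely because $\lambda''_1+\alpha''_2=-N$. Together with your correct checks for $x_n$ and $y_n$, this gives $c_n(E)=c''_n(E)$ for every $n$ and $c(E)=c''(E)$ with $E''=E$, which is what the paper uses.

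\textbf{Why the reindexing would fail.} The reversal $k\mapsto N-k$ is not a harmless change of convention here. The functions $g_1,\dots,g_6$ in the statement contain $c_k(E_0)$ exactly where Theorem~\ref{thm:polynom1gauge} has the coefficients of the polynomial $\sum_k c_k s^k$ inside $h_1,h_2$. So you need coefficientwise equality, not merely that the two polynomials $c(E)$ have the same zeros. A reversed (even rescaled) sequence gives a different polynomial factor in $h_1,h_2$, which in general does not solve the primed equation. The resulting Jackson integrals would then not be the stated $g_i$. Moreover, \eqref{eq:cE''sum} has no reversal symmetry for a single triple $(x_n,y_n,z_n)$. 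It would need relations such as $y_{N+2-j}=y_j$ and $x_{N+1-i}z_{N+2-i}=x_iz_{i+1}$. The paper uses such a reversal only to compare two different triples, in the apparent-singularity proposition for the second family.

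\textbf{Smaller points.}
\begin{itemize}
\item Your inversion of \eqref{eq:1param} is garbled. $h'_2$ is not free: it equals $l_2-N-\lambda_1-\alpha_1$, which is your first dictionary entry.
\item The relation you attribute to $\beta'$ is the formula for $\beta$. In fact $\beta'=-\beta-\chi=1-\lambda_2-\alpha_1$.
\item You should verify the hypothesis $\beta''\notin\{1,\dots,N\}$ of Proposition~\ref{prop:polynom}. It holds because $\beta''=\beta'=1-\lambda_2-\alpha_1<0$ under $\lambda_2+\alpha_1>1$.
\end{itemize}
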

\begin{proof}
Set
\begin{align}
    &\chi = (h_{1} + h_{2} - l_{1} - l_{2} + \alpha_{1} - \alpha_{2} - \beta )/2  = \lambda _{1} + \alpha _{1} - 1 , \; E' = E, \;  \alpha '_{1} = \alpha _{1}, \label{eq:1'_''} \\
    & \beta ' = -\beta - \chi, \; \alpha '_{2} = \alpha_{2} + \chi, \;  l'_{1} = l_{1}, \;  l'_{2} = l_{2}, \; h'_1 = h_1 - \chi, \; h'_2 = h_2 - \chi, \notag \\
& h''_{1} = l' _1, \;  l''_{1} = h' _1, \; h''_{2} = h' _2, \; l''_{2} = l' _2, \; \alpha ''_{1} = \alpha '_{1} , \; \alpha ''_{2} = \alpha '_{2} , \; \beta '' = \beta ' , \;  E''=E'. \notag 
 \end{align}
It follows from $h_{2} = l_{2} - 1 - N$ that $-\lambda ''_1 - \alpha '' _2 = N$, where $\lambda_{1}'' := (h_{1}'' + h_{2}'' - l_{1}'' - l_{2}'' - \alpha_{1}'' - \alpha_{2}'' - \beta'' + 2)/2$.
We have $\beta '' \not \in \{ 1,2,\dots ,N\}$, because $\beta '' = - \lambda _{2} - \alpha _{1} + 1<0$. 
Hence, the assumption of Proposition \ref{prop:polynom} holds.
Let $x''_n$, $y''_n $, $z''_n $ be the values in equation (\ref{eq:xyz''}).
It follows from equation (\ref{eq:1'_''}) that $x''_n= x_n$, $y''_n= y_n$, $z''_n= z_n$, and the polynomials  $c_{n} (E'')$ and $c (E'')$ in Proposition \ref{prop:polynom} coincide with the ones defined by equations (\ref{eq:1cnE}) and (\ref{eq:1cE}).

The condition $\beta' < 0 $ and $\alpha_{1}' < \alpha_{2}'$ is equivalent to the condition $\lambda _{2} + \alpha _{1} >1 $ and  $ \lambda _{1} + \alpha _{2}  > 1$, and we apply Theorem \ref{thm:polynom1gauge}.
The relations in equations (\ref{eq:1param}) and (\ref{eq:1parampf}) follow from equation (\ref{eq:1'_''}).

The functions $g_1(x)$ and $g_2(x)$ in Theorem \ref{thm:polynom1gauge} are written as $g_1(x)$ and $g_2(x)$ in equation (\ref{eq:1thmg1g2}).
The functions $g_3(x)$, $g_4(x)$, $g_5(x)$ and $g_6(x)$ are obtained from the functions $g_{2}(x) | _{\xi = q^{h_{1}' - 1/2}t_{1} }$, $ g_1 (x) | _{\xi = q^{h_{2}' - l_{2}' + N + 1}x }$, $g_1 (x) | _{\xi = q^{l_{1}' + 1/2}t_{1} }$, $ g_{2}(x) | _{\xi = x } $ by multiplying constants appropriately.
\end{proof}
We established in Theorem \ref{thm:1sols} that there exist non-zero solutions to the $q$-Heun equation which are expressed as a finite summation of the $q$-hypergeometric functions if $l_{2} - h_{2}$ is a positive integer and the eigenvalue $E$ satisfies $c(E)=0$ where $c(E)$ is a polynomial determined in Theorem \ref{thm:1sols}.

The case $N=0$ in this subsection, which is equivalent to $ l_2 -h_2 =1$, was partly discussed in \cite{TakK}.
In this case, the polynomial $c(E)$ is written as $c(E) =E +y_1 $ where $y_1 = ( q^{- \beta  } + 1 )q^{ -\lambda _1 +h_{1} +1/2 }t_{1} + ( q^{ \alpha_ 1 } + q^{\alpha _2} ) q^{l_2 - 1/2 }t_{2} $, and the eigenvalue $E_0$ in Theorem \ref{thm:1sols} is described as $E_0= -y_1 $.
The function $g_1 (x)$ in equation (\ref{eq:1g1g2}) on the case $N=0$ was already obtained in \cite[(5.3)]{TakK} by the $q$-integral transformation of the monomial-type function, and the functions in equations (\ref{eq:1g11}) and (\ref{eq:1g12}) were also obtained in \cite[p.18]{TakK}.
Hence, Theorem \ref{thm:1sols} is a significant extension of the results in \cite[p.18]{TakK}.

\subsection{Second family of solutions} \label{sec:2sol}

We apply another gauge transformation and a $q$-integral transformation to the polynomial-type solutions of the $q$-Heun equation.
%
\begin{thm} \label{thm:polynom2gauge}
Assume that $( h_{1}' + h_{2}' -l_{1}' - l_{2}' + \alpha_{1}' - \alpha_{2}' - \beta' - 2)/2 (=N)$ is a non-negative integer.
Let $c_n(E')$ $(n=1,\dots ,N)$ and $c (E')$ be the polynomials determined by Proposition \ref{prop:polynom}, where
\begin{align*}
h''_{1} = l' _1, \; h''_{2} = l' _2, \;  l''_{1} = h' _1, \; l''_{2} = h' _2, \; \alpha ''_{1} = \alpha '_{1} , \; \alpha ''_{2} = \alpha '_{2} , \; \beta '' = -\beta ' , \;  E''=E'.
\end{align*}
Let $E'_0$ be a zero of the polynomial $c (E'') $, i.e.~$c(E'_0 )=0$.
\\
(i) The functions
 \begin{align}
& h_1 (s) = s^{(h_{1}' + h_{2}' - l_{1}' - l_{2}' - \alpha_{1}' - \alpha_{2}' + \beta' + 2)/2 } \frac{(s/(q^{l_{1}' - 1/2}t_{1}), s/(q^{l_{2}' - 1/2}t_{2}); q)_{\infty}}{(s/(q^{h_{1}' - 1/2}t_{1}), s/(q^{h_{2}' - 1/2}t_{2}); q)_{\infty}}  \sum_{n = 0}^{N} c_{n} (E'_0) s^{n} , \label{eq:2h1h2} \\
& h_2 (s) = s^{- \alpha_{2}' -N} \frac{(q^{h_{1}' + 1/2}t_{1}/s, q^{h_{2}' + 1/2}t_{2}/s; q)_{\infty}}{(q^{l_{1}' + 1/2}t_{1}/s, q^{l_{2}' + 1/2}t_{2}/s; q)_{\infty}} \sum_{n = 0}^{N} c_{n} (E'_0) s^{n} \notag
  \end{align}
satisfy $A^{\langle 4 \rangle} (s; h_{1}', h_{2}', l_{1}', l_{2}', \alpha_{1}', \alpha_{2}', \beta') h _i (s) = E_{0}' h_i (s)$ $(i=1,2)$.
\\
(ii) Let $\xi $ be the variable which is independent of the variable $x$ or proportional to $x$.
If $\alpha_{1}' < \alpha_{2}'$, then the Jackson integral
  \begin{align}
    &g _1 (x) = (1 - q) x^{-\alpha_{1}} \frac{(q^{-l_{1} + 1/2}\xi/t_{1}, q^{-l_{2} + 1/2}\xi/t_{2}, q^{\chi + 1}\xi/x; q)_{\infty}}{(q^{-h_{1} + \chi + 1/2}\xi/t_{1}, q^{-h_{2} + \chi + 1/2}\xi/t_{2}, \xi/x; q)_{\infty}} \label{eq:2g1} \\
    &\qquad \qquad \times \sum_{n = -\infty}^{\infty} \frac{(q^{-h_{1} + \chi + 1/2}\xi/t_{1}, q^{-h_{2} + \chi + 1/2}\xi/t_{2}, \xi/x; q)_{n}}{(q^{-l_{1} + 1/2}\xi/t_{1}, q^{-l_{2} + 1/2}\xi/t_{2}, q^{\chi + 1}\xi/x; q)_{n}} \sum_{k = 0}^{N} q^{(k+1)n} \xi^{k+1} c_{k}(E'_0) \notag
  \end{align}
converges and it satisfies
  \begin{align}
    &A^{\langle 4 \rangle} (x; h_{1}, h_{2}, l_{1}, l_{2}, \alpha_{1}, \alpha_{2}, \beta) g_1 (x) \label{eq:2A4g1} \\
    &\qquad = E_0 g _1 (x) - (1 - q) x^{- \alpha_{1}}q^{\alpha_{1} + h_{1} + h_{2} - \chi}(q^{-\chi - 1 - N} - 1)t_{1}t_{2} , \notag
  \end{align}
where 
\begin{align}
&\chi = -\beta' - 1 - N, \, \beta = 1 + N, \, \alpha_{2} = \alpha_{1} - \alpha_{1}' + \alpha_{2}' - \chi, \, l_{1} = l_{1}', \, l_{2} = l_{2}', \label{eq:2param} \\ 
& h_{1} = h_{1}' + \chi, \, h_{2} = h_{2}' + \chi, \, E = q^{\alpha - \alpha_{1}'}E_{0}' . \notag
\end{align}
(iii) Under the same settings as (ii), the Jackson integral
  \begin{align}
 & g _2 (x) = (1 - q)x^{-\alpha_{1} + \chi + 1} \frac{(q^{h_{1} - \chi + 1/2}t_{1}/\xi, q^{h_{2} - \chi + 1/2}t_{2}/\xi, qx/\xi; q)_{\infty}}{(q^{l_{1} + 1/2}t_{1}/\xi, q^{l_{2} + 1/2}t_{2}/\xi, q^{-\chi}x/\xi; q)_{\infty}}  \label{eq:2g2} \\
& \qquad \qquad \times \sum_{n = -\infty}^{\infty} \frac{(q^{l_{1} + 1/2}t_{1}/\xi, q^{l_{2} + 1/2}t_{2}/\xi, q^{-\chi}x/\xi; q)_{n}}{(q^{h_{1} - \chi + 1/2}t_{1}/\xi, q^{h_{2} - \chi + 1/2}t_{2}/\xi, qx/\xi; q)_{n}} \notag \\
& \qquad \qquad \qquad \times \sum_{k = 0}^{N} q^{(h_{1} + h_{2} - l_{1} - l_{2} - \chi -k )n} \xi ^{ -h_{1} - h_{2} + l_{1} + l_{2} + \chi +k } c_{k}(E'_0) \notag
  \end{align}
converges and it satisfies
  \begin{align}
    & A^{\langle 4 \rangle} (x; h_{1}, h_{2}, l_{1}, l_{2}, \alpha_{1}, \alpha_{2}, \beta) g(x) = E_0 g(x) - (1 - q)k_{1}(x), \label{eq:2A4g2} \\
    & k_{1}(x) = x^{- \alpha _1+ \chi +1} q^{\alpha_{1} + h_{1} + h_{2} - \chi } \xi^{-h_{1} - h_{2} + l_{1} + l_{2} + \chi -1} \notag \\
    & \qquad \qquad \times \frac{\vartheta _q (q^{h_{1} - \chi + 1/2}t_{1}/\xi) \vartheta _q (q^{h_{2} - \chi + 1/2}t_{2}/\xi) \vartheta_{q} \big(qx/\xi \big)}{\vartheta _q (q^{l_{1} + 1/2}t_{1}/\xi) \vartheta _q (q^{l_{2} + 1/2}t_{2}/\xi) \vartheta_{q} (q^{-\chi } x/\xi )} \big( q^{ -\chi - 1 -N} - 1 \big) t_{1} t_{2} , \notag
  \end{align}
where the parameters satisfy equation (\ref{eq:2param}).
\end{thm}
Note that $\beta = N+1 $ in the theorem.
The value $\chi $ in the theorem is also expressed as
\begin{align*}
    & \chi = (h_{1} + h_{2} - l_{1} - l_{2} + \alpha_{1} - \alpha_{2} - \beta )/2 ,
\end{align*}
which follows from equation (\ref{eq:chi}).
\begin{proof}
Set $\lambda_{1}'' = (h_{1}'' + h_{2}'' - l_{1}'' - l_{2}'' - \alpha_{1}'' - \alpha_{2}'' - \beta'' + 2)/2$ and $\lambda ' = (l_{1}' + l_{2}' - h_{1}' - h_{2}' - \alpha_{1}' - \alpha_{2}' + \beta' + 2)/2$.
Then, we have $\lambda_{1}'' = \lambda ' $ and the assumption of $N$ is written as $-\lambda ' - \alpha_{2}' = - \lambda_{1}''  - \alpha_{2}'' =N$.
It follows from Proposition \ref{prop:polynom} that
  \begin{align*}
  A^{\langle 4 \rangle} (s; l_{1}', l_{2}', h_{1}', h_{2}', \alpha_{1}', \alpha_{2}', \beta') s^{\lambda '} \sum_{n = 0}^{N} c_{n}(E_{0}')s^{n} = E_{0}' s^{\lambda '} \sum_{n = 0}^{N} c_{n} (E_{0}') s^{n} .
  \end{align*}
We obtain (i) by applying Proposition \ref{prop:GT} twice.

We show (ii).
We investigate equation (\ref{eq:A4hslim}) for $h(s)=h_1(s)$.
If $s= q^{L}\xi$, then
  \begin{align*}
&h(s) s^{-(h_{1}' + h_{2}' - l_{1}' - l_{2}' - \alpha_{1}' - \alpha_{2}' + \beta' + 2)/2} \\
&\qquad = \frac{(q^{L - l_{1}' + 1/2}\xi/t_{1}, q^{L - l_{2}' + 1/2}\xi/t_{2}; q)_\infty}{(q^{L - h_{1}' + 1/2}\xi/t_{1}, q^{L - h_{2}' + 1/2}\xi/t_{2}; q)_\infty} \sum_{n = 0}^{N}  (q^{L}\xi)^{n} c_{n}  (E_{0}') \to c_{0}  (E_{0}') (=1) 
  \end{align*}
as $L \to \infty $. 
Hence, we have $C_1 =1$ in equation (\ref{eq:A4hslim}).
If $s= q^{K}\xi$, then
  \begin{align*}
 & h(s) s^{\alpha_{1}'}  = (q^{K}\xi)^{h_{1}' + h_{2}' - l_{1}' - l_{2}'} \frac{(q^{K - l_{1}' + 1/2}\xi/t_{1}, q^{K - l_{2}' + 1/2}\xi/t_{2}; q)_\infty}{(q^{K - h_{1}' + 1/2}\xi/t_{1}, q^{K - h_{2}' + 1/2}\xi/t_{2}; q)_\infty} \\
& \qquad \qquad \times (q^{K}\xi)^{\lambda ' + \alpha_{1}' +N} \sum_{n = 0}^{N} c_{n}(q^{K}\xi)^{n-N} . 
  \end{align*}
Note that $\lambda ' + \alpha_{1}' +N = \alpha_{1}' -\alpha_{2}' $.
It is shown by using $\vartheta _q $ as the proof of  Theorem \ref{thm:polynom1gauge} that a sufficient condition for $C_2 =0$ in equation (\ref{eq:A4hslim}) is the condition $\alpha_{1}' < \alpha_{2}' $ (see \cite{Mm} for details).
Therefore, if $ \alpha_{1}' < \alpha_{2}'$, then we can apply Theorem \ref{thm:q-trans} (i) for the function $h(s) = h_1(s)$ in equation (\ref{eq:2h1h2}).
We apply Theorem \ref{thm:q-trans} with the condition $\mu _0=0$.
It follows from
  \begin{align*}
    &N = -\lambda ' - \alpha_{2}' = -(l_{1}' + l_{2}' - h_{1}' - h_{2}' - \alpha_{1}' + \alpha_{2}' + \beta' + 2)/2 \\
    &\Leftrightarrow \beta' = - 2N - l_{1}' - l_{2}' + h_{1}' + h_{2}' + \alpha_{1}' - \alpha_{2}' - 2
  \end{align*}
that the parameters in Theorem \ref{thm:q-trans} are determined as equation (\ref{eq:2param}) and $\mu = 1 + \chi $. 
In particular, we have $\beta = 1 + N$.

We rewrite the Jackson integral $g(x)$ in equation (\ref{eq:A4Jint}) by setting $h(s)=h_1(s)$.
We have
   \begin{align*}
    & g(x) = x^{-\alpha_{1}} \int_{0}^{\xi \infty} s^{-(h_{1}' + h_{2}' - l_{1}' - l_{2}' - \alpha_{1}' - \alpha_{2}' + \beta' + 2)/2} h(s) P_{\chi +1, 0}^{(1)} (x, s) {\rm{d}}_{q}s \\
    & \quad = x^{-\alpha_{1}} \int_{0}^{\xi \infty} s^{-(h_{1}' + h_{2}' - l_{1}' - l_{2}' - \alpha_{1}' - \alpha_{2}' + \beta' + 2)/2} \\
    & \qquad \times s^{\lambda ' + h_{1}' + h_{2}' - l_{1}' - l_{2}'} \frac{(s/(q^{l_{1}' - 1/2}t_{1}), s/(q^{l_{2}' - 1/2}t_{2}); q)_\infty}{(s/(q^{h_{1}' - 1/2}t_{1}), s/(q^{h_{2}' - 1/2}t_{2}); q)_\infty} \sum_{n = 0}^{N} c_{n} (E'_0) s^{n} \frac{(q^{\chi +1} s/x; q)_{\infty}}{( s/x; q)_{\infty}} {\rm{d}}_{q}s \\
    & \quad = x^{-\alpha_{1}} \int_{0}^{\xi \infty} \frac{(s/(q^{l_{1}' - 1/2}t_{1}), s/(q^{l_{2}' - 1/2}t_{2}); q)_\infty}{(s/(q^{h_{1}' - 1/2}t_{1}), s/(q^{h_{2}' - 1/2}t_{2}); q)_\infty} \frac{(q^{\chi +1 } s/x; q)_{\infty}}{(s/x; q)_{\infty}}\sum_{k = 0}^{N} c_{k} (E'_0) s^{k} {\rm{d}}_{q}s .
  \end{align*}
Here, we used $\beta' = - 2N - l_{1}' - l_{2}' + h_{1}' + h_{2}' + \alpha_{1}' - \alpha_{2}' - 2$ and $\lambda ' = - N - \alpha_{2}'$.
It follows from the definition of the Jackson integral and equation (\ref{eq:GR1.2.30}) that
  \begin{align}
    g(x) &= (1 - q) x^{-\alpha_{1}} \sum_{n = -\infty}^{\infty} q^{n}\xi \frac{(q^{n - l_{1}' + 1/2}\xi/t_{1}, q^{n - l_{2}' + 1/2}\xi/t_{2}; q)_{\infty}}{(q^{n - h_{1}' + 1/2}\xi/t_{1}, q^{n - h_{2}' + 1/2}\xi/t_{2}; q)_\infty} \label{eq:polynom-J_Int} \\
    &\qquad \times \frac{(q^{\chi + 1 + n}\xi/x; q)_{\infty}}{(q^{n}\xi/x; q)_{\infty}} \sum_{k = 0}^{N} c_{k} (E'_0) (q^{n}\xi)^{k} \notag \\
 &= (1 - q) x^{-\alpha_{1}}\frac{(q^{- l_{1} + 1/2}\xi/t_{1}, q^{- l_{2} + 1/2}\xi/t_{2}, q^{\chi + 1}\xi/x; q)_{\infty}}{(q^{- h_{1} + \chi + 1/2}\xi/t_{1}, q^{- h_{2} + \chi + 1/2}\xi/t_{2}, \xi/x; q)_\infty} \notag \\
    &\quad \times \sum_{n = -\infty}^{\infty} \frac{(q^{- h_{1} + \chi + 1/2}\xi/t_{1}, q^{- h_{2} + \chi + 1/2}\xi/t_{2}, \xi/x; q)_{n}}{(q^{- l_{1} + 1/2}\xi/t_{1}, q^{- l_{2} + 1/2}\xi/t_{2}, q^{\chi + 1}\xi/x; q)_{n}} \sum_{k = 0}^{N} q^{(k+1)n} \xi^{k+1} c_{k}  (E'_0) . \notag
  \end{align}
Therefore, we obtain the function $g_1 (x)$ in equation (\ref{eq:2g1}), and it follows from Theorem \ref{thm:q-trans} (i) that the function $g_1 (x)$ satisfies equation (\ref{eq:thmA4gE1}).
If $\alpha_{1}' < \alpha_{2}'$, then we showed that $C_1=1$ and $C_2=0$, and the function $g_1 (x)$ satisfies equation (\ref{eq:2A4g1}).

We show (iii).
We investigate equation (\ref{eq:A4hslim}) for $h(s)=h_2(s)$.
If $s= q^{L}\xi$, then it follows from equation (\ref{def:theta}) and Lemma \ref{lem:theta} that
\begin{align*}
  &h(s) s^{- (h_{1}' + h_{2}' - l_{1}' - l_{2}' - \alpha_{1}' - \alpha_{2}' + \beta' + 2)/2} \\
  &= (q^{L}\xi)^{-h_{1}' - h_{2}' + l_{1}' + l_{2}'} \frac{(q^{h_{1}' + 1/2 - L}t_{1}/\xi, q^{h_{2}' + 1/2 - L}t_{2}/\xi; q)_\infty}{(q^{l_{1}' + 1/2 - L}t_{1}/\xi, q^{l_{2}' + 1/2 - L}t_{2}/\xi; q)_\infty} \sum_{n = 0}^{N} c_{n} (E'_0) (q^{L}\xi)^{n} \notag \\
  &= \xi^{-h_{1}' - h_{2}' + l_{1}' + l_{2}'} \frac{\vartheta_{q}(q^{h_{1}' + 1/2 }t_{1}/\xi) \vartheta_{q} ( q^{h_{2}' + 1/2 }t_{2}/\xi )}{\vartheta_{q} (q^{l_{1}' + 1/2 }t_{1}/\xi) \vartheta_{q} (q^{l_{2}' + 1/2 }t_{2}/\xi ) } \notag \\
& \qquad \times \frac{(q^{L- l_{1}' + 1/2 } \xi / t_{1}, q^{L- l_{2}' + 1/2 } \xi / t_{2}; q)_\infty}{(q^{L- h_{1}' + 1/2 } \xi / t_{1}, q^{L- h_{2}' + 1/2 } \xi / t_{2}; q)_\infty} \sum_{n = 0}^{N} c_{n} (E'_0) (q^{L}\xi)^{n} \notag \\
  & \to \xi^{-h_{1}' - h_{2}' + l_{1}' + l_{2}'} \frac{\vartheta_{q}(q^{h_{1}' + 1/2 }t_{1}/\xi) \vartheta_{q} ( q^{h_{2}' + 1/2 }t_{2}/\xi )}{\vartheta_{q} (q^{l_{1}' + 1/2 }t_{1}/\xi) \vartheta_{q} (q^{l_{2}' + 1/2 }t_{2}/\xi ) } c_{0} (E'_0) \quad ( L \to +\infty ). \notag
\end{align*}
Therefore, we have 
\begin{align}
 & C_{1} = \xi^{-h_{1} - h_{2} + l_{1} + l_{2} + 2\chi} \frac{\vartheta_q (q^{h_{1} - \chi + 1/2}t_{1}/\xi) \vartheta _q (q^{h_{2} - \chi + 1/2}t_{2}/\xi)}{\vartheta _q (q^{l_{1} + 1/2}t_{1}/\xi) \vartheta _q (q^{l_{2} + 1/2}t_{2}/\xi)} . \label{eq:2C1}
  \end{align}
If $s= q^{K}\xi$, then
  \begin{align*}
  h(s)s^{\alpha_{1}'} 
  &= \frac{(q^{h_{1}' + 1/2 - K}t_{1}/\xi, q^{h_{2}' + 1/2 - K}t_{2}/\xi; q)_\infty}{(q^{l_{1}' + 1/2 - K}t_{1}/\xi, q^{l_{2}' + 1/2 - K}t_{2}/\xi; q)_\infty} (q^{K}\xi)^{\lambda ' + \alpha_{1}' +N} \sum_{n = 0}^{N} c_{n}  (E'_0) (q^{K}\xi)^{n-N} .
  \end{align*}
Note that $\lambda ' + \alpha_{1}' +N = \alpha_{1}' -\alpha_{2}' $.
A sufficient condition for $C_2 =0$ in equation (\ref{eq:A4hslim}) is the condition $\alpha_{1}' < \alpha_{2}' $.
Therefore, if $ \alpha_{1}' < \alpha_{2}'$, then we can apply Theorem \ref{thm:q-trans} (ii) for the function $h(s) = h_2(s)$ in equation (\ref{eq:2h1h2}).

We rewrite the Jackson integral $g(x)$ in equation (\ref{eq:A4Jint2}) by setting $h(s)=h_2(s)$.
We have
\begin{align*}
 & g(x) = x^{-\alpha_{1}} \int_{0}^{\xi \infty} s^{-(h_{1}' + h_{2}' - l_{1}' - l_{2}' - \alpha_{1}' - \alpha_{2}' + \beta' + 2)/2} h(s) P_{\chi +1, 0 }^{(2)} (x, s) {\rm{d}}_{q}s \\
  &= x^{-\alpha_{1} + \chi + 1}\int_{0}^{\xi \infty} s^{- h_{1}' - h_{2}' + l_{1}' + l_{2}' - \chi - 1} \\
  &\qquad \times \frac{(q^{h_{1}' + 1/2}t_{1}/s, q^{h_{2}' + 1/2}t_{2}/s, qx/s; q)_\infty}{(q^{l_{1}' + 1/2}t_{1}/s, q^{l_{2}' + 1/2}t_{2}/s, q^{-\chi} x/s; q)_\infty} \sum_{k = 0}^{N} c_{k} (E'_0) s^{k} {\rm{d}}_{q}s . 
\end{align*}
It follows from the definition of the Jackson integral and equation (\ref{eq:GR1.2.30}) that
\begin{align*}
  &g(x) = x^{-\alpha_{1} + \chi + 1} (1 - q) \sum_{n = -\infty}^{\infty} q^{n}\xi (q^{n}\xi)^{-h_{1}' - h_{2}' + l_{1}' + l_{2}' - \chi - 1} \\
  & \qquad \times \frac{(q^{h_{1}' + 1/2 - n}t_{1}/\xi, q^{h_{2}' + 1/2 - n}t_{2}/\xi, q^{1 - n}x/\xi; q)_\infty}{(q^{l_{1}' + 1/2 - n}t_{1}/\xi, q^{l_{2}' + 1/2 - n}t_{2}/\xi, q^{-\chi - n} x/\xi; q)_\infty} \sum_{k = 0}^{N} c_{k} (E'_0) (q^{n}\xi)^{k} \\
  &= (1 - q) \xi^{-h_{1}' - h_{2}' + l_{1}' + l_{2}' - \chi} x^{-\alpha_{1} + \chi + 1} \frac{(q^{h_{1}' + 1/2}t_{1}/\xi, q^{h_{2}' + 1/2}t_{2}/\xi, qx/\xi; q)_\infty}{(q^{l_{1}' + 1/2}t_{1}/\xi, q^{l_{2}' + 1/2}t_{2}/\xi, q^{-\chi} x/\xi; q)_\infty} \\
  &\times \sum_{n = -\infty}^{\infty}  \frac{(q^{l_{1}' + 1/2}t_{1}/\xi, q^{l_{2}' + 1/2}t_{2}/\xi, q^{-\chi} x/\xi; q)_{-n}}{(q^{h_{1}' + 1/2}t_{1}/\xi, q^{h_{2}' + 1/2}t_{2}/\xi, qx/\xi; q)_{-n}} q^{(-h_{1}' - h_{2}' + l_{1}' + l_{2}' - \chi)n} \sum_{k = 0}^{N} c_{k} (E'_0) q^{nk} \xi^{k} .
\end{align*}
By replacing $-n$ to $n$ and using equation (\ref{eq:2param}), we obtain the function $g_2 (x)$ in equation (\ref{eq:2g2}), and it follows from Theorem \ref{thm:q-trans} (ii) that the function $g_2 (x)$ satisfies equation (\ref{eq:thmA4gE2}).
If $\alpha_{1}' < \alpha_{2}'$, then we have $C_2=0$ and $C_1$ satisfies equation (\ref{eq:2C1}).
Therefore, the function $g_2 (x)$ satisfies equation (\ref{eq:2A4g2}).
\end{proof}

The functions $g_1(x)$ and $g_2 (x) $ in Theorem \ref{thm:polynom2gauge} are expressed as finite summations of the bilateral basic hypergeometric series $_3\psi _3$.
If the value $\xi$ is specified suitably, we obtain unilateral series.

By setting $\xi = q^{h_{1} - \chi - 1/2}t_{1}, \, \xi = q^{h_{2} - \chi - 1/2}t_{2}, \, \xi = x$ in equation (\ref{eq:2g2}) and using equation (\ref{eq:qshift0}), we introduce the following functions
  \begin{align*}
& \tilde{g}_{3}(x) = g_2 (x) | _{\xi = q^{h_{1} - \chi - 1/2}t_{1} } = (1 - q) \frac{(q, q^{- h_{1} + h_{2} + 1}t_{2}/t_{1}, q^{- h_{1} + \chi + 3/2}x/t_{1}; q)_\infty}{(q^{-h_{1} + l_{1} + \chi + 1}, q^{-h_{1} + l_{2} + \chi + 1}t_{2}/t_{1}, q^{-h_{1} + 1/2}x/t_{1}; q)_\infty} \notag \\
  &\quad \times x^{-\alpha_{1} + \chi + 1} \sum_{k = 0}^{N} (q^{h_{1} - \chi - 1/2}t_{1})^{-h_{1} - h_{2} + l_{1} + l_{2} + \chi + k} c_{k} (E_{0}') \notag \\
  &\quad \times {}_3\phi_2 \biggl( \begin{array}{@{}c@{}} q^{-h_{1} + l_{1} + \chi + 1}, q^{-h_{1} + l_{2} + \chi + 1}t_{2}/t_{1}, q^{-h_{1} + 1/2}x/t_{1} \\
     q^{- h_{1} + h_{2} + 1}t_{2}/t_{1}, q^{- h_{1} + \chi + 3/2}x/t_{1} \end{array} ;q,q^{h_{1} + h_{2} - l_{1} - l_{2} - \chi - k} \biggr), \\
& \tilde{g}_{4}(x) = g_2 (x) | _{\xi = q^{h_{2} - \chi - 1/2}t_{2} } =  (1 - q)  \frac{(q^{h_{1} - h_{2} + 1}t_{1}/t_{2}, q, q^{- h_{2} + \chi + 3/2}x/t_{2}; q)_\infty}{(q^{-h_{2} + l_{1} + \chi + 1}t_{1}/t_{2}, q^{-h_{2} + l_{2} + \chi + 1}, q^{-h_{2} + 1/2}x/t_{2}; q)_\infty} \notag \\
  &\quad \times x^{-\alpha_{1} + \chi + 1} \sum_{k = 0}^{N} (q^{h_{2} - \chi - 1/2}t_{2})^{-h_{1} - h_{2} + l_{1} + l_{2} + \chi + k} c_{k} (E_{0}') \notag \\
  &\quad \times {}_3\phi_2 \biggl( \begin{array}{@{}c@{}} q^{-h_{2} + l_{1} + \chi + 1}t_{1}/t_{2}, q^{-h_{2} + l_{2} + \chi + 1}, q^{-h_{2} + 1/2}x/t_{2} \\
     q^{h_{1} - h_{2} + 1}t_{1}/t_{2}, q^{- h_{2} + \chi + 3/2}x/t_{2} \end{array} ;q,q^{h_{1} + h_{2} - l_{1} - l_{2} - \chi - k} \biggr), \\
& \tilde{g}_{5}(x) = g_2 (x) | _{\xi =  x } =  (1 - q) x^{-h_{1} - h_{2} + l_{1} + l_{2} -\alpha_{1} + 2\chi + 1} \frac{(q^{h_{1} - \chi + 1/2}t_{1}/x, q^{h_{2} - \chi + 1/2}t_{2}/x, q; q)_{\infty}}{(q^{l_{1} + 1/2}t_{1}/x, q^{l_{2} + 1/2}t_{2}/x, q^{-\chi}; q)_{\infty}} \notag \\
  &\quad \times \sum_{k = 0}^{N} x^k c_{k}  (E_{0}') {}_3\phi_2 \biggl( \begin{array}{@{}c@{}} q^{l_{1} + 1/2}t_{1}/x, q^{l_{2} + 1/2}t_{2}/x, q^{-\chi} \\
     q^{h_{1} - \chi + 1/2}t_{1}/x, q^{h_{2} - \chi + 1/2}t_{2}/x \end{array} ;q,q^{h_{1} + h_{2} - l_{1} - l_{2} - \chi - k} \biggr).
  \end{align*}
\begin{prop} \label{prop:tg3tg4tg5}
The functions $\tilde{g}_3 (x) , \tilde{g}_{4} (x) , \tilde{g}_{5} (x) $ satisfy the $q$-Heun equation (\ref{eq:q-Heun}).
\end{prop}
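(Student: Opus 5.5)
The plan is to deduce the proposition from Theorem \ref{thm:polynom2gauge} (iii), read at the three special values of $\xi$. By (\ref{eq:2A4g2}), for every admissible $\xi$ we have
\begin{align*}
A^{\langle 4 \rangle} (x; h_{1}, h_{2}, l_{1}, l_{2}, \alpha_{1}, \alpha_{2}, \beta) g_2(x) = E_0 g_2(x) - (1 - q)k_{1}(x).
\end{align*}
Here $\tilde g_3,\tilde g_4,\tilde g_5$ are $g_2$ with $\xi$ specialized, and the parameters are fixed by (\ref{eq:2param}), with $\alpha_1'<\alpha_2'$ assumed throughout. So it suffices to show two things: the specialization is legitimate, and $k_1(x)$ vanishes at each of the three values of $\xi$.

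The vanishing of $k_1$ comes from its theta factors, using $\vartheta_q(q)=(q,1,q;q)_\infty=0$. At $\xi=q^{h_1-\chi-1/2}t_1$ the factor $\vartheta_q(q^{h_1-\chi+1/2}t_1/\xi)$ equals $\vartheta_q(q)=0$. Likewise, at $\xi=q^{h_2-\chi-1/2}t_2$ the factor $\vartheta_q(q^{h_2-\chi+1/2}t_2/\xi)$ vanishes. At $\xi=x$ the factor $\vartheta_q(qx/\xi)$ vanishes; this choice is allowed, since Theorem \ref{thm:q-trans} permits $\xi$ proportional to $x$. In each case I will check that the denominator factors $\vartheta_q(q^{l_i+1/2}t_i/\xi)$ and $\vartheta_q(q^{-\chi}x/\xi)$ stay nonzero for generic parameters. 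For example, at $\xi=x$ the factor $\vartheta_q(q^{-\chi})$ is nonzero provided $\chi\notin\mathbb{Z}$, and otherwise one argues by continuity in the parameters. Granted this, $k_1\equiv 0$ and each function satisfies the homogeneous $q$-Heun equation (\ref{eq:q-Heun}).

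Next I will confirm that the bilateral sum in (\ref{eq:2g2}) actually becomes the stated unilateral ${}_3\phi_2$ series at these values. At $\xi=q^{h_1-\chi-1/2}t_1$ the denominator symbol $(q^{h_1-\chi+1/2}t_1/\xi;q)_n$ becomes $(q;q)_n$. By (\ref{eq:qshift0}) all terms with $n<0$ drop out, and the remaining prefactors $(\,\cdot\,;q)_\infty$ are finite and nonzero generically. The same holds for $t_2$. At $\xi=x$ it is the factor $(qx/\xi;q)_n=(q;q)_n$ that does the truncation. Collecting the $k$-sum then reproduces exactly the formulas for $\tilde g_3,\tilde g_4,\tilde g_5$.

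The step I expect to be the main obstacle is justifying that the identity (\ref{eq:2A4g2}) survives the specialization. One can do this in two ways. The first route is to note that both sides of (\ref{eq:2A4g2}) are meromorphic in $\xi$ near the special value, with the series converging uniformly there. The terms with $n<0$ tend to $0$ as $\xi$ approaches the special value, so one may pass to the limit. The second route is to observe that the Jackson integral with the truncated sum is itself what Theorem \ref{thm:q-trans} (ii) produces at that $\xi$: the summands with $n\ge1$ in the $s$-form vanish because $(q^{1-n};q)_\infty=0$. Either way, convergence of the resulting ${}_3\phi_2$ series must also be checked. Their argument is $q^{h_1+h_2-l_1-l_2-\chi-k}$ with $0\le k\le N$, and via (\ref{eq:2param}) and $\lambda'=-N-\alpha_2'$ its exponent is $\alpha_2'-\alpha_1'+N-k$, which is positive under $\alpha_1'<\alpha_2'$. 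This is consistent with the convergence assumption used for $C_2=0$.
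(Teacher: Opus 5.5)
Your proposal is correct and follows the paper's proof: you apply Theorem~\ref{thm:polynom2gauge}~(iii) at the three special values of $\xi$ and note that $k_1(x)\equiv 0$ there because of the factor $\vartheta_q(q)=0$, which is exactly the paper's two-line argument; your checks on the truncation of the bilateral sum and on convergence are extra detail the paper omits. One harmless slip: the ${}_3\phi_2$ argument exponent $h_1+h_2-l_1-l_2-\chi-k$ equals $\alpha_2'-\alpha_1'+N+1-k$ (that is, $\lambda_1+\alpha_2+N-k$), not $\alpha_2'-\alpha_1'+N-k$, and it is still positive under $\alpha_1'<\alpha_2'$.
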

\begin{proof}
It follows from Theorem \ref{thm:polynom2gauge} (iii) that the functions $\tilde{g}_3 (x) , \tilde{g}_{4} (x) , \tilde{g}_{5} (x) $ satisfies equation (\ref{eq:2A4g2}).
Since $\xi = q^{h_{1} - \chi - 1/2}t_{1}$, $\xi = q^{h_{2} - \chi - 1/2}t_{2}$ or $\xi = x$, the function $ k_{1}(x)$ in equation (\ref{eq:2A4g2}) is identically zero and we obtain the $q$-Heun equation.
\end{proof}
By setting $\xi = q^{l_{1} + 1/2}t_{1}, \, \xi = q^{l_{2} + 1/2}t_{2}, \xi = q^{-\chi}x$ in equation (\ref{eq:2g1}) and using equation (\ref{eq:qshift0}), we introduce the following functions
  \begin{align*}
    &g_{6}(x) = g_1 (x) | _{\xi = q^{l_{1} + 1/2}t_{1} } = (1 - q)x^{-\alpha_{1}}\frac{(q, q^{l_{1} - l_{2} + 1}t_{1}/t_{2}, q^{l_{1} + \chi + 3/2}t_{1}/x; q)_{\infty}}{(q^{-h_{1} + l_{1} + \chi + 1}, q^{-h_{2} + l_{1} + \chi + 1}t_{1}/t_{2}, q^{l_{1} + 1/2}t_{1}/x; q)_{\infty}} \\
    &\qquad \times \sum_{k = 0}^{N} (q^{l_{1} + 1/2}t_{1})^{k+1} c_k (E_{0}') {}_3\phi_2 \biggl( \begin{array}{@{}c@{}} q^{-h_{1} + l_{1} + \chi + 1}, q^{-h_{2} + l_{1} + \chi + 1}t_{1}/t_{2}, q^{l_{1} + 1/2}t_{1}/x \\
q^{l_{1} - l_{2} + 1}t_{1}/t_{2}, q^{l_{1} + \chi + 3/2}t_{1}/x \end{array} ;q,q^{k+1} \biggr), \\
    &g_{7}(x)  = g_1 (x) | _{\xi = q^{l_{2} + 1/2}t_{2} } =  (1 - q)x^{-\alpha_{1}}\frac{(q^{- l_{1} + l_{2} + 1}t_{2}/t_{1}, q, q^{l_{2} + \chi + 3/2}t_{2}/x; q)_{\infty}}{(q^{-h_{1} + l_{2} + \chi + 1}t_{2}/t_{1}, q^{-h_{2} + l_{2} + \chi + 1}, q^{l_{2} + 1/2}t_{2}/x; q)_{\infty}} \\
    &\qquad \times \sum_{k = 0}^{N} (q^{l_{2} + 1/2}t_{2})^{k+1}  c_k (E_{0}') {}_3\phi_2 \biggl( \begin{array}{@{}c@{}} q^{-h_{1} + l_{2} + \chi + 1}t_{2}/t_{1}, q^{-h_{2} + l_{2} + \chi + 1}, q^{l_{2} + 1/2}t_{2}/x \\
q^{- l_{1} + l_{2} + 1}t_{2}/t_{1}, q^{l_{2} + \chi + 3/2}t_{2}/x \end{array} ;q,q^{k+1} \biggr), \\
    &g_{8}(x)  = g_1 (x) | _{\xi = q^{-\chi}x } =  (1 - q)x^{-\alpha_{1}}\frac{(q^{-l_{1} - \chi + 1/2}x/t_{1}, q^{-l_{2} - \chi + 1/2}x/t_{2}, q; q)_{\infty}}{(q^{-h_{1} + 1/2}x/t_{1}, q^{-h_{2} + 1/2}x/t_{2}, q^{-\chi}; q)_{\infty}} \\
    &\qquad \times \sum_{k = 0}^{N} (q^{- \chi}x)^{k+1}  c_k (E_{0}') {}_3\phi_2 \biggl( \begin{array}{@{}c@{}} q^{-h_{1} + 1/2}x/t_{1}, q^{-h_{2} + 1/2}x/t_{2}, q^{-\chi} \\
q^{-l_{1} - \chi + 1/2}x/t_{1}, q^{-l_{2} - \chi + 1/2}x/t_{2} \end{array} ;q,q^{k+1} \biggr).
  \end{align*}
The functions $g_6 (x) , g_7 (x) , g_8 (x) $ satisfy equation (\ref{eq:2A4g1}), and we have the following proposition.
\begin{prop}
Let $i, \, j \in \{6, 7, 8\}$.
The function $g_i (x) - g_{j} (x) $ satisfies the $q$-Heun equation (\ref{eq:q-Heun}).
\end{prop}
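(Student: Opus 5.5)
The plan is to use linearity of the operator $A^{\langle 4 \rangle}(x; h_1,h_2,l_1,l_2,\alpha_1,\alpha_2,\beta)$. The key observation is that the inhomogeneous term in equation (\ref{eq:2A4g1}),
\begin{align*}
-(1-q)x^{-\alpha_1} q^{\alpha_1+h_1+h_2-\chi}(q^{-\chi-1-N}-1)t_1t_2 ,
\end{align*}
does not depend on the choice of $\xi$. Therefore, once each of $g_6, g_7, g_8$ is known to satisfy (\ref{eq:2A4g1}) with this same right-hand side, subtracting the equations for $g_i$ and $g_j$ cancels the inhomogeneous term. This gives $A^{\langle 4 \rangle} (g_i-g_j) = E_0 (g_i - g_j)$, which is the $q$-Heun equation (\ref{eq:q-Heun}) with $E=E_0$.

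First I justify that $g_6,g_7,g_8$ satisfy (\ref{eq:2A4g1}). Theorem \ref{thm:polynom2gauge} (ii) is stated for $\xi$ independent of $x$ or proportional to $x$. The choices $\xi = q^{l_1+1/2}t_1$ and $\xi=q^{l_2+1/2}t_2$ are constants, and $\xi = q^{-\chi}x$ is proportional to $x$, so all three are admissible. The hypothesis $\alpha_1'<\alpha_2'$ gives convergence and $C_1=1$, $C_2=0$, exactly as in the proof of (ii).

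Next I check that the bilateral series (\ref{eq:2g1}) at these $\xi$ reduce to the stated unilateral ${}_3\phi_2$ expressions. At $\xi=q^{l_1+1/2}t_1$ the denominator factor $(q^{-l_1+1/2}\xi/t_1;q)_n=(q;q)_n$ vanishes in reciprocal for $n<0$ by (\ref{eq:qshift0}), and the same happens for the other two choices. So $g_6,g_7,g_8$ are well-defined specializations of $g_1$. One must be careful that the prefactor $(q^{-h_1+\chi+1/2}\xi/t_1,\dots;q)_\infty$ in the denominator and $(q^{\chi+1}\xi/x;q)_\infty$ do not vanish or blow up at these points; this only requires generic parameters, or the analytic continuation in $\xi$ of the identity (\ref{eq:2A4g1}). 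Since the right-hand side of (\ref{eq:2A4g1}) is independent of $\xi$, the equation persists under specialization.

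The one genuine point to verify, and the only place I expect any subtlety, is the claim that the term $k_1(x)$ from Theorem \ref{thm:q-trans} (i) is truly $\xi$-independent. In case (i), $k_1(x) = C_1 x^{-\alpha_1} q^{\mu_0+\alpha_1+h'_1+h'_2+\chi}(q^{\beta'}-1)t_1t_2$ with $C_1 = c_0(E'_0)=1$, independent of $\xi$. This contrasts with part (iii), where $C_1$ in (\ref{eq:2C1}) carries theta factors depending on $\xi$. After rewriting $h_i' = h_i-\chi$ and $\beta' = -\chi-1-N$, with $\mu_0=0$, this matches the term in (\ref{eq:2A4g1}). The difference then solves the homogeneous equation, which completes the proof.
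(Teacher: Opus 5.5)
Your proposal is correct and follows the paper's own argument. The paper notes that $g_6, g_7, g_8$ all satisfy (\ref{eq:2A4g1}), whose inhomogeneous term comes from $C_1 = c_0(E'_0) = 1$ and $C_2 = 0$ and so does not depend on $\xi$; it therefore cancels in $g_i - g_j$ by linearity of $A^{\langle 4 \rangle}$, as you say.
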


In Theorem \ref{thm:polynom2gauge}, the parameter $h_{1}' ,h_{2}', \dots ,\beta' $ are given with the condition $( h_{1}' + h_{2}' -l_{1}' - l_{2}' + \alpha_{1}' - \alpha_{2}' - \beta' - 2)/2 =N$ $(N \in {\mathbb Z}_{\geq 0})$, and the parameter $h_{1} ,h_{2}, \dots ,\beta $ are determined by equation (\ref{eq:2param}).
Then, the relation $\beta = N +1 $ holds.
We rewrite the theorem by starting from the parameters $h_{1} ,h_{2}, \dots ,\beta $ with the condition $\beta = N +1  $ and by adding the functions obtained by specializing the value $\xi $.
\begin{thm} \label{thm:2sols}
Set $\lambda_{1} = (h_{1} + h_{2} - l_{1} - l_{2} - \alpha_{1} - \alpha_{2} - \beta + 2)/2$.
Let $N$ be a non-negative integer, and assume that $\beta = N +1 $ and $-\lambda_{1} -\alpha _1 \not \in \{ 0,1,\dots ,N-1 \}$.
Set $c_{-1}(E)=0 $,  $c_0(E)=1 $,
\begin{align}
 & x_n =t_{1}t_{2} q^{ n - N +1 + h_{1} + h_{2} - \alpha _1 - 2 \lambda _1 } (1 - q^{-n})(1 - q^{N-n + \lambda _1 + \alpha _1 }) , \label{eq:2xyz}  \\
 & y_n =  q^{N -n + 1/2 + \lambda _1 + \alpha_{1} + \alpha_{2} } (q^{l_{1}}t_{1} + q^{l_{2}}t_{2}) + q^{n- N-1/2  - \lambda _1} (q^{h_{1}}t_{1} + q^{h_{2}}t_{2})  , \notag \\
 & z_n = q^{ n - N-2 + \alpha_{1}  } (1 - q^{N -n +1 +\lambda _1 + \alpha_{2}  })(1 - q^{N- n + 2}), \notag
\end{align}
and we determine the polynomials $c_n(E)$ $(n=1,\dots ,N)$ recursively by
\begin{align}
 & c_{n} (E) x_n = c_{n - 1} (E) [E + y_n ] -c_{n-2} (E) z_n . \label{eq:2cnE}
\end{align}
We define the polynomial $c (E)$ by
\begin{align}
 & c (E) = x_1 x_2 \cdots x_{N} [ c_{N} (E) \{ E + y_{N+1} \} -c_{N-1} (E) z_{N+1} ] . \label{eq:2cE}
\end{align}
Assume that $E=E_0$ is a solution of the algebraic equation $c (E)=0$.
\\
(i) Let $\xi $ be the variable which is independent of the variable $x$ or proportional to $x$.
If  $ \lambda _{1} + \alpha _{2}  > 1$, then the Jackson integral
  \begin{align*}
    &g _1 (x) = (1 - q) x^{-\alpha_{1}} \frac{(q^{-l_{1} + 1/2}\xi/t_{1}, q^{-l_{2} + 1/2}\xi/t_{2}, q^{ \lambda _{1} + \alpha _{1} }\xi/x; q)_{\infty}}{(q^{ \lambda _{1} -h_{1} + \alpha _{1} - 1/2}\xi/t_{1}, q^{ \lambda _{1} -h_{2}+ \alpha _{1} - 1/2}\xi/t_{2}, \xi/x; q)_{\infty}} \\
    &\qquad \times \sum_{n = -\infty}^{\infty} \frac{(q^{ \lambda _{1} -h_{1} + \alpha _{1} - 1/2}\xi/t_{1}, q^{ \lambda _{1} -h_{2} + \alpha _{1} - 1/2}\xi/t_{2}, \xi/x; q)_{n}}{(q^{-l_{1} + 1/2}\xi/t_{1}, q^{-l_{2} + 1/2}\xi/t_{2}, q^{ \lambda _{1} + \alpha _{1} }\xi/x; q)_{n}} \sum_{k = 0}^{N} q^{( k+1)n} \xi^{k+1} c_{k}(E_0) \notag
  \end{align*}
converges and it satisfies
  \begin{align}
    &A^{\langle 4 \rangle} (x; h_{1}, h_{2}, l_{1}, l_{2}, \alpha_{1}, \alpha_{2}, \beta) g (x) \label{eq:2qHnonh1} \\
    &\qquad = E_0 g (x) - (1 - q) x^{- \alpha_{1}}q^{ - \lambda _{1} + h_{1} + h_{2} + 1 }(q^{- \lambda _{1} - \alpha _{1} - N} - 1)t_{1}t_{2} . \notag
  \end{align}
(ii) If  $ \lambda _{1} + \alpha _{2}  > 1$, then the Jackson integral
  \begin{align*}
    &g _2 (x) = (1 - q) x^{ \lambda _{1} } \frac{(q^{-  \lambda _{1} + h_{1} - \alpha _{1} + 3/2}t_{1}/\xi, q^{ - \lambda _{1} +h_{2} - \alpha _{1} + 3/2}t_{2}/\xi, qx/\xi; q)_{\infty}}{(q^{l_{1} + 1/2}t_{1}/\xi, q^{l_{2} + 1/2}t_{2}/\xi, q^{- \lambda _{1} - \alpha _{1} + 1}x/\xi; q)_{\infty}} \\
    & \qquad \qquad \times \sum_{n = -\infty}^{\infty} \frac{(q^{l_{1} + 1/2}t_{1}/\xi, q^{l_{2} + 1/2}t_{2}/\xi, q^{- \lambda _{1} - \alpha _{1} + 1 }x/\xi; q)_{n}}{(q^{- \lambda _{1} + h_{1} - \alpha _{1} + 3/2}t_{1}/\xi, q^{- \lambda _{1} + h_{2} - \alpha _{1} + 3/2}t_{2}/\xi, qx/\xi; q)_{n}} \\
    & \qquad \qquad \qquad \qquad \times \sum_{k = 0}^{N} q^{( \lambda _{1}  + \alpha_{2}+ N -k)n} \xi^{- \lambda _{1}  -\alpha_{2} - N + k} c_{k}(E_0) \notag
  \end{align*}
converges and it satisfies
  \begin{align*}
    & A^{\langle 4 \rangle} (x; h_{1}, h_{2}, l_{1}, l_{2}, \alpha_{1}, \alpha_{2}, \beta) g(x) = E_0 g(x) - (1 - q) x^{ \lambda _{1} } q^{ - \lambda _{1} + h_{1} + h_{2} + 1} \xi^{- \lambda _{1} -\alpha_{2}  - N  -1}  \\
    & \quad \times \frac{\vartheta _q (q^{- \lambda _{1} + h_{1} - \alpha _{1} + 3/2}t_{1}/\xi) \vartheta _q (q^{- \lambda _{1} + h_{2} - \alpha _{1} + 3/2}t_{2}/\xi) \vartheta_{q} \big(qx/\xi \big)}{\vartheta _q (q^{l_{1} + 1/2}t_{1}/\xi) \vartheta _q (q^{l_{2} + 1/2}t_{2}/\xi) \vartheta_{q} (q^{- \lambda _{1} - \alpha _{1} + 1 } x/\xi )} \big( q^{ - \lambda _{1} - \alpha _{1} -N} - 1 \big) t_{1} t_{2} . \notag
  \end{align*}
(iii) Set
\begin{align*}
& g_{3}(x) = x^{ \lambda _{1} } \frac{( q^{ \lambda _{1} - h_{1} + \alpha _{1} + 1/2}x/t_{1}; q)_\infty}{( q^{-h_{1} + 1/2}x/t_{1}; q)_\infty}  \sum_{k = 0}^{N} (q^{- \lambda _{1} + h_{1}  - \alpha _{1} + 1/2}t_{1})^k c_{k} (E_{0})  \notag \\
  &\quad \times {}_3\phi_2 \biggl( \begin{array}{@{}c@{}} q^{ \lambda _{1} -h_{1} + l_{1} + \alpha _{1} }, q^{ \lambda _{1} -h_{1} + l_{2} + \alpha _{1} }t_{2}/t_{1}, q^{-h_{1} + 1/2}x/t_{1} \\
     q^{- h_{1} + h_{2} + 1}t_{2}/t_{1}, q^{ \lambda _{1} - h_{1} + \alpha _{1} + 1/2}x/t_{1} \end{array} ;q,q^{ \lambda _{1}  + \alpha_{2} +N  - k} \biggr), \\
  &  g_{4}(x) = x^{ \lambda _{1} } \frac{( q^{ \lambda _{1} - h_{2} + \alpha _{1} + 1/2}x/t_{2}; q)_\infty}{( q^{-h_{2} + 1/2}x/t_{2}; q)_\infty}  \sum_{k = 0}^{N}  (q^{- \lambda _{1} +h_{2} - \alpha _{1} + 1/2}t_{2})^k c_{k} (E_{0}) \notag \\
  &\quad \times {}_3\phi_2 \biggl( \begin{array}{@{}c@{}} q^{ \lambda _{1} -h_{2} + l_{1} + \alpha _{1} }t_{1}/t_{2}, q^{ \lambda _{1} -h_{2} + l_{2} + \alpha _{1} }, q^{-h_{2} + 1/2}x/t_{2} \\
     q^{h_{1} - h_{2} + 1}t_{1}/t_{2}, q^{ \lambda _{1} - h_{2} + \alpha _{1} + 1/2}x/t_{2} \end{array} ;q,q^{ \lambda _{1}  + \alpha_{2} +N - k} \biggr), \\
 & g_{5}(x) = x^{ - \alpha_{2} } \frac{(q^{- \lambda _{1} + h_{1} - \alpha _{1} + 3/2}t_{1}/x, q^{- \lambda _{1} + h_{2} - \alpha _{1} + 3/2}t_{2}/x ; q)_{\infty}}{(q^{l_{1} + 1/2}t_{1}/x, q^{l_{2} + 1/2}t_{2}/x ; q)_{\infty}} \sum_{k = 0}^{N} x^{k-N} c_{k}  (E_{0}) \notag \\
  &\quad \times  {}_3\phi_2 \biggl( \begin{array}{@{}c@{}} q^{l_{1} + 1/2}t_{1}/x, q^{l_{2} + 1/2}t_{2}/x, q^{- \lambda _{1} - \alpha _{1} + 1} \\
     q^{- \lambda _{1} + h_{1} - \alpha _{1} + 3/2}t_{1}/x, q^{- \lambda _{1} + h_{2} - \alpha _{1} + 3/2}t_{2}/x \end{array} ;q,q^{ \lambda _{1}  + \alpha_{2} +N - k} \biggr).
\end{align*}
The functions $g_3(x)$,  $g_4(x)$ and $g_5(x)$ satisfy the $q$-Heun equation
  \begin{align*}
    & A^{\langle 4 \rangle} (x; h_{1}, h_{2}, l_{1}, l_{2}, \alpha_{1}, \alpha_{2}, \beta) g(x) = E_0 g(x) . 
  \end{align*}
(iv)
Set 
  \begin{align*}
    &g_{6}(x) = (1 - q)x^{-\alpha_{1}}\frac{(q, q^{l_{1} - l_{2} + 1}t_{1}/t_{2}, q^{ \lambda _{1} + l_{1} +\alpha _{1} + 1/2}t_{1}/x; q)_{\infty}}{(q^{ \lambda _{1} -h_{1} + l_{1} + \alpha _{1} }, q^{ \lambda _{1} -h_{2} + l_{1} + \alpha _{1} }t_{1}/t_{2}, q^{l_{1} + 1/2}t_{1}/x; q)_{\infty}} \\
    &\quad \times \sum_{k = 0}^{N} (q^{l_{1} + 1/2}t_{1})^{k+1} c_k (E_{0}) {}_3\phi_2 \biggl( \begin{array}{@{}c@{}} q^{ \lambda _{1} -h_{1} + l_{1} + \alpha _{1} }, q^{ \lambda _{1} -h_{2} + l_{1} + \alpha _{1} }t_{1}/t_{2}, q^{l_{1} + 1/2}t_{1}/x \\
q^{l_{1} - l_{2} + 1}t_{1}/t_{2}, q^{\lambda _{1} + l_{1} + \alpha _{1} + 1/2}t_{1}/x \end{array} ;q,q^{k+1} \biggr), \\
    &g_{7}(x) = (1 - q)x^{-\alpha_{1}}\frac{(q,q^{- l_{1} + l_{2} + 1}t_{2}/t_{1}, q^{\lambda _{1} + l_{2} + \alpha _{1} + 1/2}t_{2}/x; q)_{\infty}}{(q^{ \lambda _{1} -h_{2} + l_{2} + \alpha _{1} }, q^{ \lambda _{1} -h_{1} + l_{2} + \alpha _{1} }t_{2}/t_{1},  q^{l_{2} + 1/2}t_{2}/x; q)_{\infty}} \\
    &\quad \times \sum_{k = 0}^{N} (q^{l_{2} + 1/2}t_{2})^{k+1}  c_k (E_{0}) {}_3\phi_2 \biggl( \begin{array}{@{}c@{}} q^{ \lambda _{1} -h_{2} + l_{2} + \alpha _{1} }, q^{ \lambda _{1} -h_{1} + l_{2}  + \alpha _{1} }t_{2}/t_{1},  q^{l_{2} + 1/2}t_{2}/x \\
q^{- l_{1} + l_{2} + 1}t_{2}/t_{1}, q^{\lambda _{1} + l_{2} + \alpha _{1} + 1/2}t_{2}/x \end{array} ;q,q^{k+1} \biggr), \\
    &g_{8}(x) = (1 - q)x^{-\alpha_{1}}\frac{(q, q^{- \lambda _{1} -l_{1} - \alpha _{1} + 3/2}x/t_{1}, q^{- \lambda _{1} -l_{2}  - \alpha _{1} + 3/2}x/t_{2} ; q)_{\infty}}{( q^{- \lambda _{1} - \alpha _{1} + 1}, q^{-h_{1} + 1/2}x/t_{1}, q^{-h_{2} + 1/2}x/t_{2}; q)_{\infty}} \\
    &\quad \times \sum_{k = 0}^{N} (q^{- \lambda _{1} - \alpha _{1} + 1}x)^{k+1}  c_k (E_{0}) {}_3\phi_2 \biggl( \begin{array}{@{}c@{}} q^{- \lambda _{1} - \alpha _{1} + 1}, q^{-h_{1} + 1/2}x/t_{1}, q^{-h_{2} + 1/2}x/t_{2} \\
q^{- \lambda _{1} -l_{1} - \alpha _{1} + 3/2}x/t_{1}, q^{- \lambda _{1} -l_{2} - \alpha _{1} + 3/2}x/t_{2} \end{array} ;q,q^{k+1} \biggr).
  \end{align*}
The functions $g_{i} (x)- g_{j} (x)$ $(i, \, j \in \{6, 7, 8\})$ satisfy the $q$-Heun equation.
\\
(v) The functions $g_6(x)$,  $g_7(x)$ and $g_8(x)$ satisfy equation (\ref{eq:2qHnonh1}).
 \end{thm}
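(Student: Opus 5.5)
The plan is to reduce Theorem \ref{thm:2sols} to Theorem \ref{thm:polynom2gauge}, Proposition \ref{prop:tg3tg4tg5} and the subsequent proposition on $g_i - g_j$, by reversing the parameter correspondence, exactly as Theorem \ref{thm:1sols} was deduced from Theorem \ref{thm:polynom1gauge}.

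\emph{Step 1: the primed and double-primed parameters.} Starting from $h_1,\dots ,\beta$ with $\beta = N+1$, I set
\[
\chi = (h_1+h_2-l_1-l_2+\alpha_1-\alpha_2-\beta)/2 = \lambda_1+\alpha_1-1,
\]
together with $\mu_0=0$, $\alpha'_1=\alpha_1$, $\alpha'_2=\alpha_2+\chi$, $l'_i=l_i$, $h'_i=h_i-\chi$, $\beta'=-\beta-\chi$ and $E'=E$. This is equation (\ref{eq:chi}) with $\mu_0=0$. The double-primed parameters are then fixed as in Theorem \ref{thm:polynom2gauge}: $h''_i=l'_i$, $l''_i=h'_i$, $\beta''=-\beta'$, $\alpha''_i=\alpha'_i$.

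One checks that $(h'_1+h'_2-l'_1-l'_2+\alpha'_1-\alpha'_2-\beta'-2)/2 = \beta-1 = N$, which is the hypothesis of Theorem \ref{thm:polynom2gauge}. For Proposition \ref{prop:polynom} I also need $\beta''\notin\{1,\dots ,N\}$. Since $\beta''=-\beta'=\beta+\chi=N+\lambda_1+\alpha_1$, this is precisely the assumption $-\lambda_1-\alpha_1\notin\{0,\dots ,N-1\}$.

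Next I substitute these values into $x''_n, y''_n, z''_n$ of equation (\ref{eq:xyz''}). Using $\lambda''_1=-N-\alpha'_2$, I verify that they coincide with $x_n,y_n,z_n$ in equation (\ref{eq:2xyz}). Consequently $c_n(E)$ and $c(E)$ are the polynomials of Proposition \ref{prop:polynom}. This bookkeeping is the step I expect to be most error-prone. The exponent $\lambda''_1$ must be carefully rewritten as $-N-\alpha_2-\lambda_1-\alpha_1+1$, and the factor $(1-q^n)(1-q^{n-\beta''})$ must be turned into the form $(1-q^{-n})(1-q^{N-n+\lambda_1+\alpha_1})$, with the powers of $q$ tracked.

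\emph{Step 2: parts (i) and (ii).} The convergence condition $\alpha'_1<\alpha'_2$ becomes $\alpha_1<\alpha_2+\lambda_1+\alpha_1-1$, that is, $\lambda_1+\alpha_2>1$. The remaining rewritings are:
\begin{itemize}
\item $\chi+1=\lambda_1+\alpha_1$;
\item $-h_i+\chi+1/2=\lambda_1-h_i+\alpha_1-1/2$;
\item $-h_1-h_2+l_1+l_2+\chi=-\lambda_1-\alpha_2-N$, using $\beta=N+1$;
\item $q^{\alpha_1+h_1+h_2-\chi}=q^{-\lambda_1+h_1+h_2+1}$ and $q^{-\chi-1-N}=q^{-\lambda_1-\alpha_1-N}$.
\end{itemize}
With these, $g_1,g_2$ of Theorem \ref{thm:polynom2gauge}(ii),(iii) and the inhomogeneous terms in equations (\ref{eq:2A4g1}) and (\ref{eq:2A4g2}) become the stated ones. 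This gives (i) and (ii).

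\emph{Step 3: parts (iii)--(v).} By Proposition \ref{prop:tg3tg4tg5}, the functions $\tilde g_3,\tilde g_4,\tilde g_5$ solve the homogeneous equation, because $k_1\equiv 0$ when $\xi$ is a zero of the theta factor. After rewriting parameters, $g_3,g_4,g_5$ are $\tilde g_3,\tilde g_4,\tilde g_5$ multiplied by nonzero constants:
\begin{itemize}
\item for $g_3$ I divide by $(1-q)(q^{h_1-\chi-1/2}t_1)^{-\lambda_1-\alpha_2-N}$ and the constant $q$-Pochhammer factors;
\item for $g_4$ the factor is analogous;
\item for $g_5$ I note that $x^{-h_1-h_2+l_1+l_2-\alpha_1+2\chi+1}=x^{-\alpha_2-N}$.
\end{itemize}
Since the equation is linear, these multiples are again solutions, which gives (iii).

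For $g_6,g_7,g_8$ I compare with the specializations of $g_1$ at $\xi=q^{l_1+1/2}t_1$, $q^{l_2+1/2}t_2$ and $q^{-\chi}x$. These agree after the substitution $\chi=\lambda_1+\alpha_1-1$, with no rescaling, so the inhomogeneous term is preserved. Thus each satisfies (\ref{eq:2qHnonh1}), proving (v). Since the inhomogeneous term does not depend on $\xi$, the differences $g_i-g_j$ solve the homogeneous equation, proving (iv).

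One should also note that the unilateral truncation uses equation (\ref{eq:qshift0}) on the factor $(q;q)_n$ in the denominator. The $_3\phi_2$ series then converge because $|q^{k+1}|<1$ and $|q^{\lambda_1+\alpha_2+N-k}|<1$, the latter holding since $\lambda_1+\alpha_2>1$.
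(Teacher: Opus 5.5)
Your proposal is correct and follows the paper's own proof essentially step for step. You use the same parameter dictionary with $\mu_0=0$, $\alpha'_1=\alpha_1$ and $\chi=\lambda_1+\alpha_1-1$, the same check that $\beta''=N+\lambda_1+\alpha_1\notin\{1,\dots,N\}$ and that $x''_n,y''_n,z''_n$ reduce to $x_n,y_n,z_n$, and the same reduction of (i)--(v) to Theorem \ref{thm:polynom2gauge}, Proposition \ref{prop:tg3tg4tg5} and the subsequent proposition, rescaling $\tilde g_3,\tilde g_4,\tilde g_5$ by constants. Like the paper, you implicitly carry the hypothesis $\lambda_1+\alpha_2>1$ into parts (iii)--(v).
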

\begin{proof}
Set
\begin{align}
    &\chi = (h_{1} + h_{2} - l_{1} - l_{2} + \alpha_{1} - \alpha_{2} - \beta )/2  = \lambda _{1} + \alpha _{1} - 1 , \quad \alpha '_{1} = \alpha _{1} \label{eq:2'_''} \\
    &E' = E, \quad \beta ' = -\beta - \chi, \quad \alpha '_{2} = \alpha_{2} + \chi, \quad  l'_{i} = l_{i}, \quad h'_{i} = h_{i} - \chi, \quad i = 1, 2, \notag \\
& h''_{1} = l' _1, \; h''_{2} = l' _2, \;  l''_{1} = h' _1, \; l''_{2} = h' _2, \; \alpha ''_{1} = \alpha '_{1} , \; \alpha ''_{2} = \alpha '_{2} , \; \beta '' = - \beta ' , \;  E''=E'. \notag 
 \end{align}
It follows from $\beta = N +1$ that $-\lambda ''_1 - \alpha '' _2 = N $.
Since $\beta '' = \beta + \chi = N + \lambda _{1} + \alpha _{1}$, we have $\beta '' \not \in \{ 1,2,\dots ,N\}$ by the assumption.
Hence, the assumption of Proposition \ref{prop:polynom} holds.
Let $x''_n$, $y''_n $, $z''_n $ be the values in equation (\ref{eq:xyz''}).
It follows from equation (\ref{eq:2'_''}) that $x''_n= x_n$, $y''_n= y_n$, $z''_n= z_n$, and the polynomials  $c_{n} (E'')$ and $c (E'')$ in Proposition \ref{prop:polynom} coincide with the ones defined by equations (\ref{eq:2cnE}) and (\ref{eq:2cE}).
The condition $\alpha_{1}' < \alpha_{2}'$ follows from $ \lambda _{1} + \alpha _{2}  > 1$.
Hence, we obtain the theorem by Theorem \ref{thm:polynom2gauge} and the subsequent propositions.
Note that we multiplied constants appropriately to the functions $\tilde{g}_3(x)$,  $\tilde{g}_4(x)$ and $\tilde{g}_5(x)$ in Proposition \ref{prop:tg3tg4tg5} to have the functions $g_3(x)$,  $g_4(x)$ and $g_5(x)$ in (iii).
\end{proof}

\begin{prop} 
Assume that $\beta = N +1 $ and $N$ is a non-negative integer.
The condition that the singularity $x=0$ of the $q$-Heun equation is apparent is equivalent to the condition that the eigenvalue $E$ satisfies the algebraic equation $c(E)=0$ in Theorem \ref{thm:2sols}.
\end{prop}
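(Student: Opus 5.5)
The plan is to reduce the statement to the discussion of Section \ref{sec:appsing}. Specialize that discussion to $h''_i=h_i$, $l''_i=l_i$, $\alpha''_i=\alpha_i$, $\beta''=\beta=N+1$, $E''=E$. Then $\lambda''_1=\lambda_1$, the exponents at $x=0$ are $\lambda_1$ and $\lambda_1+N+1$, and $x=0$ is apparent exactly when the left-hand side of equation (\ref{eq:recrelcN+1''}) vanishes. Here $c_1,\dots,c_N$ come from the recursion (\ref{eq:recrelcn''}) with $c_0=1$; they are well defined because the factor $1-q^{n-\beta''}$ in $x''_n$ does not vanish for $n\le N$. Clearing denominators, this condition is the polynomial equation $\tilde c(E):=x''_1\cdots x''_N[c_N(E)(E+y''_{N+1})-c_{N-1}(E)z''_{N+1}]=0$. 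This is the same construction as (\ref{eq:cxyz''}), so the expansion (\ref{eq:cE''sum}) holds for $\tilde c$ with the unprimed-by-Theorem data $x''_n,y''_n,z''_n$ of the equation itself. The statement therefore reduces to the identity $\tilde c(E)=c(E)$, or at least equality up to a nonzero constant, where $c(E)$ is built in Theorem \ref{thm:2sols} from $x_n,y_n,z_n$ in (\ref{eq:2xyz}).

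The key observation is that the numbers in (\ref{eq:2xyz}) are the data $x''_n,y''_n,z''_n$ of the $q$-Heun equation itself read in reverse order. Write $\hat x_n,\hat y_n,\hat z_n$ for the quantities (\ref{eq:xyz''}) attached to the parameters $(h_i,l_i,\alpha_i,\beta=N+1)$. With $\lambda_1''=\lambda_1$ and $\beta=N+1$, I would check directly that
\begin{align*}
y_n=\hat y_{N+2-n},\qquad z_n=\hat z_{N+3-n}\;(2\le n\le N+1),\qquad x_n z_{n+1}=\hat x_{N+1-n}\hat z_{N+2-n}\;(1\le n\le N).
\end{align*}
For example, $y_n$ in (\ref{eq:2xyz}) has exponents $N-n+1/2+\lambda_1+\alpha_1+\alpha_2$ and $n-N-1/2-\lambda_1$. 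These are exactly the exponents of $\hat y_m$ with $m=N+2-n$. Likewise, the factors $(1-q^{N-n+1+\lambda_1+\alpha_2})$ and $(1-q^{N-n+2})$ of $z_n$ match the corresponding factors of $\hat z_m$ and $\hat x_m$ after reindexing. The two $q$-power prefactors combine correctly in the product $xz$. If the products do not match term by term in this way, the fallback is to compare $x_n z_{n+1}$ with $\hat x_m\hat z_{m+1}$ for the appropriate reversed index.

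Given these identities, apply (\ref{eq:cE''sum}) to both $c(E)$ and $\tilde c(E)$. Each is a sum over sets of disjoint adjacent pairs $\{i,i+1\}\subset\{1,\dots,N+1\}$. Each such term is weighted by $(-1)^k\prod x_iz_{i+1}$ times the product of $(E+y_j)$ over the remaining indices. The reversal $j\mapsto N+2-j$ is a bijection on these configurations: it sends the pair $\{i,i+1\}$ to $\{N+1-i,N+2-i\}$. By the identities above it maps each term of $c(E)$ to the corresponding term of $\tilde c(E)$. Hence $c(E)=\tilde c(E)$, and $c(E)=0$ is equivalent to apparency.

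The main obstacle is the bookkeeping in verifying $x_nz_{n+1}=\hat x_{N+1-n}\hat z_{N+2-n}$, which requires tracking the exponents involving $h_1+h_2$, $\lambda_1$, $\alpha_1$ and $\alpha_2$. I expect the prefactors to agree only as products, which is why the statement is phrased through $c(E)$ and the expansion (\ref{eq:cE''sum}) rather than through the individual $c_n(E)$. A conceptual alternative is a duality argument: $c(E)=0$ gives a polynomial solution of the transformed equation (\ref{eq:2'_''}), which via Theorem \ref{thm:2sols}(iii) yields a solution $g_3$ of the form $x^{\lambda_1}\times$(power series), and hence apparency. The converse direction of that argument is less direct, so I would use the combinatorial identity.
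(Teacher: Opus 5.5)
Your proposal is correct and follows essentially the paper's proof. The paper defines $d(E)$ from the recursion (\ref{eq:recreldn}) at $x=0$ (this is your $\tilde c(E)$), notes that apparency holds iff $d(E)=0$, and proves $c(E)=d(E)$ using $y^*_{N+2-n}=y_n$, $x^*_{N+1-n}z^*_{N+2-n}=x_nz_{n+1}$ and the index reversal in (\ref{eq:cE''sum}). One correction: the separate identity $z_n=\hat z_{N+3-n}$ you list is false, since the factor $1-q^{N-n+2}$ of $z_n$ comes from $\hat x_{N+2-n}$ rather than from $\hat z$. As you anticipated, only the product identity $x_nz_{n+1}=\hat x_{N+1-n}\hat z_{N+2-n}$ is needed, and it does hold.
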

\begin{proof}
We recall the definition that the singularity $x=0$ is apparent in the setting of the $q$-Heun equation.
Set $\lambda_{1} = (h_{1} + h_{2} - l_{1} - l_{2} - \alpha_{1} - \alpha_{2} - \beta + 2)/2$.
If the formal series $ s^{\lambda _{1}} \sum_{n =0}^{\infty } d_{n} x^{n} $ is a solution to the $q$-Heun equation, then we have
\begin{align}
 & d_{n} t_{1}t_{2}q^{  1- n + h_{1} + h_{2} - \lambda _{1}} (1 - q^{n})(1 - q^{n - \beta }) \label{eq:recreldn} \\
 & \ - d_{n - 1} [E + q^{3/2- n - \lambda _{1} } (q^{h_{1}}t_{1} + q^{h_{2}}t_{2} ) + q^{n - 3/2 + \lambda _{1} + \alpha _{1} + \alpha _{2} }( q^{l_{1}}t_{1} + q^{l_{2}}t_{2} ) ] \notag \\
 & \ + d_{n - 2} q^{ 2- n - \lambda _{1} } (1 - q^{n - 2 + \lambda _{1} + \alpha _{1} })(1 - q^{n - 2 + \lambda _{1} + \alpha _{2} }) = 0 . \notag
  \end{align}
for $n = 1 ,2,3, \cdots $, where $d_{-1} =0$.
Set $d_0=1$ and determine $d_n$ for $n=1,2,\dots ,N$ recursively by equation (\ref{eq:recreldn}).
On equation (\ref{eq:recreldn}) for $n=N+1$, the coefficint of $d_{N+1}$ vanishes, because $\beta = N +1 $.
Then, the apparency of the singularity $x=0$ is that the equation (\ref{eq:recreldn}) for $n=N+1$ holds.

Set $d_{-1}(E)=0 $,  $d_0(E)=1 $,
\begin{align*}
 & x^* _n = t_{1}t_{2}q^{  1- n + h_{1} + h_{2} - \lambda _{1}} (1 - q^{n})(1 - q^{n - \beta }) , \\
 & y^* _n = q^{3/2- n - \lambda _{1} } (q^{h_{1}}t_{1} + q^{h_{2}}t_{2} ) + q^{n - 3/2 + \lambda _{1} + \alpha _{1} + \alpha _{2} }( q^{l_{1}}t_{1} + q^{l_{2}}t_{2} ) , \notag \\
 & z^* _n = q^{ 2- n - \lambda _{1} } (1 - q^{n - 2 + \lambda _{1} + \alpha _{1} })(1 - q^{n - 2 + \lambda _{1} + \alpha _{2} }) , \notag
\end{align*}
and we determine the polynomials $d_n(E)$ $(n=1,\dots ,N)$ recursively by
\begin{align}
 & d_{n} (E) x^* _n = d_{n - 1} (E) [E + y^* _n ] -d_{n-2} (E) z^* _n . \label{eq:dnE}
\end{align}
We define the polynomial $d (E)$ by
\begin{align}
 & d (E) = x^* _1 x^* _2 \cdots x^* _{N} [ d_{N} (E) \{ E + y^* _{N+1} \} - d_{N-1} (E) z^* _{N+1} ] . \label{eq:dE}
\end{align}
Then, the singularity $x=0$ is apparent, if and only if the eigenvalue $E$ satisfies $d(E)=0$.

It is enough to show that $c(E) = d(E)$.
Let $x_n , y_n, z_n$ be the values defined in equation (\ref{eq:2xyz}).
Then, we have
\begin{equation*}
y^* _{N+2-n} =y_n , \; x^* _{N+1-n} z^* _{N+2-n} = x_n z_{n+1} .
\end{equation*}
It follows from equation (\ref{eq:cE''sum}) and the corresponding equation for $d(E)$ that
\begin{align*}
 & d (E) = \sum (-1) ^k x^* _{j_1} z^* _{j_1 +1 } x^* _{j_2} z^* _{j_2 +1 } \cdots x^* _{j_k} z^* _{j_k +1 } \! \! \! \! \! \! \! \! \! \!  \prod _{j' \in \{ 1,2, \dots ,N +1 \} \setminus \atop{\{ j_1 , j_1 +1 , \dots , j_k , j_k +1  \} }} \! \! \! \! \! \! \! \! \! \! (E + y^* _{j'} ) , \\
& = \sum (-1) ^k x^* _{N+1 - i_k} z^* _{N +2 - i_k } \cdots x^* _{N+1 -i_2} z^* _{N+2 - i_2 } x^* _{N+1 -i_1} z^* _{N+2 -i_1 } \! \! \! \! \! \! \! \! \! \! \prod _{i' \in \{ 1,2, \dots ,N +1 \} \setminus \atop{\{ i_1 , i_1 +1 , \dots , i_k , i_k +1  \} }}  \! \! \! \! \! \! \! \! \! \!  (E + y^* _{N+2-i'} ) \notag \\
 &  = \sum (-1) ^k x _{i_1} z _{i_1 +1 } x _{i_2} z _{i_2 +1 } \cdots x _{i_k} z _{i_k +1 } \! \! \! \! \! \! \! \! \! \!  \prod _{i' \in \{ 1,2, \dots ,N +1 \} \setminus \atop{\{ i_1 , i_1 +1 , \dots , i_k , i_k +1  \} }} \! \! \! \! \! \! \! \! \! \! (E + y _{i'} ) = c(E) . \notag 
\end{align*}
\end{proof}

\begin{cor} 
Assume that $\beta = N +1 $ and $N$ is a non-negative integer.
If the singularity $x=0$ of the $q$-Heun equation is apparent and $-\lambda_{1} -\alpha _1 \not \in \{ 0,1,\dots ,N-1 \}$, then there exist non-zero solutions to the $q$-Heun equation which are expressed as a finite summation of the $q$-hypergeometric functions.
The explicit forms of the solutions are described in Theorem \ref{thm:2sols}.
\end{cor}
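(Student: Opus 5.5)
The corollary follows by combining the preceding proposition with Theorem \ref{thm:2sols}.

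First, assume $\beta = N+1$ with $N \in \mathbb{Z}_{\geq 0}$ and that $x=0$ is apparent. By the proposition just proved, apparency is equivalent to $c(E)=0$, where $c(E)$ is the polynomial defined by equation (\ref{eq:2cE}). So $E_0 := E$ is a zero of $c$. Together with $-\lambda_1-\alpha_1 \notin \{0,1,\dots,N-1\}$, all hypotheses of Theorem \ref{thm:2sols} are in force. Parts (iii) and (iv) of that theorem then give $g_3(x)$, $g_4(x)$, $g_5(x)$ and $g_i(x)-g_j(x)$ ($i,j\in\{6,7,8\}$) as solutions of the $q$-Heun equation. Each is a finite sum over $k=0,\dots,N$ of ${}_3\phi_2$ series times explicit prefactors, which is the asserted form.

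The only real obstacle is non-vanishing: we must check that at least one of these functions is not identically zero. I would use $g_5(x)$, or $g_3(x)$ as a fallback.

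For $g_5$, expand in powers of $1/x$ near $x=\infty$. Each ${}_3\phi_2$ tends to ${}_1\phi_0$-type constants as $x \to \infty$. The dominant term is the one with $k=N$, which behaves like $x^{-\alpha_2}c_N(E_0)$ times a nonzero ${}_1\phi_0(q^{-\lambda_1-\alpha_1+1};-;q,q^{\lambda_1+\alpha_2})$. More carefully, collect the exponents $x^{-\alpha_2+k-N}$ across all $k$ and extract the leading coefficient. This coefficient involves $c_k(E_0)$, and if $c_N(E_0)$ happens to vanish we pass to the next nonzero $c_k$; some $c_k$ is nonzero because $c_0=1$.

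The delicate point is that different $k$ also contribute to the same power of $x$ through the $1/x$ expansions of the ${}_3\phi_2$. To handle this, I would instead expand $g_3$ near $x=0$, where the natural exponent is $\lambda_1$. Its leading coefficient as $x\to 0$ is $\sum_k (q^{-\lambda_1+h_1-\alpha_1+1/2}t_1)^k c_k(E_0)\,{}_2\phi_1(\dots;q^{\lambda_1+\alpha_2+N-k})$. I would argue this cannot vanish for generic parameters $t_1,t_2$, since it is a nontrivial polynomial in $t_1$ with constant term from $k=0$ equal to $1\cdot{}_2\phi_1\neq 0$. This gives nonvanishing generically.

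If a cleaner argument is needed, I would go back to the Jackson-integral representation. Nonvanishing of $g_2(x)$ for generic $\xi$ follows because the integrand $h_2(s)$ is nonzero: it is an explicit product with a nonzero polynomial factor. Hence some specialization is nonzero.
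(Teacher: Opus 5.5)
Your reduction is exactly how the paper obtains the corollary. The paper reads it off from the preceding proposition (apparency $\Leftrightarrow$ $c(E)=0$) together with Theorem \ref{thm:2sols}, and it offers no separate argument that the resulting functions are non-zero. (Strictly, parts (iii)--(iv) of Theorem \ref{thm:2sols} come from Theorem \ref{thm:polynom2gauge} under $\alpha_1'<\alpha_2'$, i.e.\ $\lambda_1+\alpha_2>1$. The corollary, like your proposal, leaves this condition implicit.)

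The step you add, non-vanishing, is a fair concern, but none of your arguments for it works as written.
\begin{itemize}
\item For $g_3$, the $x^{\lambda_1}$-coefficient is not a polynomial in $t_1$ with a distinguished constant term. The quantities $x_n,y_n,z_n$ are homogeneous in $(t_1,t_2)$ of degrees $2,1,0$. Hence $E_0$ has degree $1$ and $c_k(E_0)$ has degree $-k$. So every summand $(q^{-\lambda_1+h_1-\alpha_1+1/2}t_1)^k c_k(E_0)\,{}_2\phi_1(\cdots)$ has degree $0$, i.e.\ it is a function of $t_2/t_1$ alone, and the $k=0$ term is not singled out.
\item Even a correct ``generic parameters'' conclusion would not prove the corollary, which is asserted for every admissible parameter set.
\item The Jackson-integral fallback also fails. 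A bilateral sum with non-vanishing terms can cancel to $0$. Moreover, $g_2$ at generic $\xi$ does not satisfy the homogeneous equation, since Theorem \ref{thm:2sols} (ii) has a $\vartheta_q$-term. Finally, non-vanishing at generic $\xi$ implies nothing at the three special values of $\xi$ that produce $g_3,g_4,g_5$.
\end{itemize}

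Your first idea is the one that works, and the mixing you feared does not occur at leading order. In $g_5$ the terms with $k<N$ are $O(x^{-\alpha_2-1})$ as $x\to\infty$. So by the $q$-binomial theorem, $x^{\alpha_2}g_5(x)\to c_N(E_0)\,(q^{\alpha_2-\alpha_1+1};q)_\infty/(q^{\lambda_1+\alpha_2};q)_\infty$. What must be checked is that this constant is non-zero, and you did not check either factor.
\begin{itemize}
\item $c_N(E_0)\neq0$. Suppose $c_N(E_0)=0$. The relations (\ref{eq:recrelcn''}) for $n=N+1,N,\dots,2$ hold with $c_{N+1}=0$, by the proof of Proposition \ref{prop:polynom}. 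Running them downward forces $c_{N-1}=\dots=c_0=0$, unless some $z_n$ with $2\le n\le N+1$ vanishes. That happens only if $\lambda_1+\alpha_2\in\{1-N,\dots,0\}$, which $\lambda_1+\alpha_2>1$ excludes. This contradicts $c_0=1$.
\item The ${}_1\phi_0$ factor, which you assert is non-zero, vanishes when $\alpha_1-\alpha_2\in\mathbb{Z}_{\ge1}$. In that case, use that $A^{\langle 4\rangle}$ and the apparency polynomial $d(E)$ are symmetric in $\alpha_1,\alpha_2$. Theorem \ref{thm:2sols} with $\alpha_1\leftrightarrow\alpha_2$ then applies: $\lambda_1+\alpha_1>1$ and $-\lambda_1-\alpha_2\notin\{0,\dots,N-1\}$ hold automatically. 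Its $g_5$ is asymptotic to a non-zero multiple of $x^{-\alpha_1}$.
\end{itemize}
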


We give comments to the case $N=0$ in this subsection, which is equivalent to $\beta = 1$.
In this case, the polynomial $c(E)$ is written as $c(E) =E +y_1 $ where $y_1 = y_1^* = q^{1/2 - \lambda _{1} } (q^{h_{1}}t_{1} + q^{h_{2}}t_{2} ) + q^{- 1/2 + \lambda _{1} + \alpha _{1} + \alpha _{2} }( q^{l_{1}}t_{1} + q^{l_{2}}t_{2} ) $, and the eigenvalue $E_0$ in Theorem \ref{thm:2sols} is described as $E_0= -y_1 $.
The $q$-Heun equation $ A^{\langle 4 \rangle} (x; h_{1}, h_{2}, l_{1}, l_{2}, \alpha_{1}, \alpha_{2}, \beta) g (x) = E_0 g (x)  $ in this setting is the variant of the $q$-hypergeometric equation of degree two (see equation (\ref{eq:varqhgdeg2})).
The functions $h_1 (x)$, $h_2(x)$ and $h_3(x)$ in equation (\ref{eq:solvarHG2}) are equal to the functions $g_3(x)$, $g_4(x)$ and $g_5 (x)$ in the case $N=0$.
By restricting Theorem \ref{thm:2sols} to the case $N=0$, the results in \cite[\S 4.1.1]{AT}, which were technically based on the $q$-middle conovlution, are recovered.

\section{Concluding remarks} \label{sec:rem}

In this paper, we obtained two families of the special solutions of the $q$-Heun equation which are written as finite summations of the $q$-hypergeometric functions by using the $q$-integral transformation.
One is described in Theorem \ref{thm:2sols}, where the parameters satisfy the condition that $\beta $ is a positive integer and the singularity $x=0$ of the $q$-difference equation is apparent.
By replacing the variable $x$ to $1/x$, we have similar solutions to the $q$-Heun equation of which the parameters satisfy $\alpha _1 - \alpha _2 \in {\mathbb Z}$ and the singularity $x=\infty $ is apparent.
Another family of the special solutions is described in Theorem \ref{thm:1sols}, where the parameters satisfy the condition that $ l_{2} - h_{2} $ is a positive integer and the accessory parameter $E$ satisfies a certain relation.
Although the relation for $E$ is described explicitly, direct interpretation from the structure of the $q$-difference equation is not obtained.
It might be related to some analogue of apparent singularities of differential equations other than the points $x=0, \infty $, and we hope to develop a theory in this direction.

We give comments on the $q$-difference equations which may have similar structures of solutions.
The $q$-Heun equation is related to the $q$-Painlev\'e VI, which was denoted by $q$-$P(D^{(1)}_5)$ from the symmetry, through the $2\times 2$ linear $q$-difference equations (see \cite{TakR}). 
The variants of the $q$-Heun equation were introduced in \cite{TakqH} and they are also related to the $q$-Painlev\'e equations $q$-$P(E^{(1)}_6)$ and $q$-$P(E^{(1)}_7)$.
Kernel functions for the variants of the $q$-Heun equation were introduced in \cite{TakK}, and $q$-integral transformations were partly investigated there.
It is expected to look into solutions of these equations.
Inversely, degenerations of the $q$-Heun equation were introduced by Sato and the second author \cite{ST}, and the studies of their solutions would have been scarcely dealt with.

\section*{Acknowledgements}
The second author was supported by JSPS KAKENHI Grant Number JP22K03368.

\end{document}